\documentclass[12pt]{amsart}
\synctex=1
\usepackage{epsfig}
\usepackage{tikz,tikz-cd}
\usepackage{amssymb, amsmath}
\usepackage{amsfonts,graphics,epsfig}
\usepackage{mathrsfs}
\usepackage{pb-diagram, pb-xy}
\usepackage[all]{xy}
\usepackage{comment}
\usepackage{tikz}
\usepackage{setspace}
\usepackage[pagewise]{lineno}
\usepackage[left=1.5in,right=1.5in,top=1.5in,bottom=1.5in]{geometry}
\usepackage{color}   
\usepackage{hyperref}
\hypersetup{
    colorlinks=true, 
    linktoc=all,     
    linkcolor=red,
        citecolor=blue,  
}
\usepackage{tcolorbox}
\usepackage{enumitem}
\usepackage{soul}

\DeclareMathOperator{\Pic}{\mathrm{Pic}}
\DeclareMathOperator{\Ex}{\mathrm{Ex}}

\def\mcF{\mathcal F}

\def\M{\mathbf M}
\def\N{\mathbf N}
\def\A{\mathbf A}
\def\C{\mathbf C}

\newtheorem{theorem}{Theorem}[section]
\newtheorem{lemma}[theorem]{Lemma}
\newtheorem{proposition}[theorem]{Proposition}
\newtheorem{corollary}[theorem]{Corollary}

\theoremstyle{remark}
\newtheorem{remark}[theorem]{Remark}
\theoremstyle{definition}
\newtheorem{definition}[theorem]{Definition}
\theoremstyle{definition}
\newtheorem{example}[theorem]{Example}
\numberwithin{equation}{section}
\theoremstyle{remark}

\theoremstyle{definition}

\newcommand{\mbR}{\mathbb{R}}
\newcommand{\mbC}{\mathbb{C}}

\newcommand{\mbQ}{\mathbb{Q}}

\def\mbN{\mathbb{N}}

\def\>{\geq}

\def\subset{\subseteq}

\newcommand{\lrd}{\lfloor}
\newcommand{\rrd}{\rfloor}

\newcommand{\D}{\Delta}

\def\mcF{\mathcal{F}}
\def\mcG{\mathcal{G}}

\def\Supp{\operatorname{Supp}}
\def\dim{\operatorname{dim}}

\def\Ex{\operatorname{Ex}}

\def\Pic{\operatorname{Pic}}

\def\hor{\operatorname{\hor}}
\def\ver{\operatorname{\ver}}

\def\ver{\operatorname{\textsubscript{\rm ver}}}
\def\hor{\operatorname{\textsubscript{\rm hor}}}

\author{Priyankur Chaudhuri}
\address{Dipartimento di Matematica F. Enriques, Universita degli Studi di Milano, Via Cesare Saldini `
50, 20133 Milano, Italy.}
\address{Yau Mathematical Sciences Center, Tsinghua University, Beijing 100084, China.}
\email{pkurisibang@gmail.com}
\email{chaudhurip@mail.tsinghua.edu.cn}

\author{Roktim Mascharak}
\address{Physics Building, UCL, Gower St, London WC1E 6BT}
\email{roktim.mascharak.23@ucl.ac.uk}

\title[MMP for corank one foliations]{Log canonical minimal model program for corank one foliations on threefolds}

\begin{document}

\begin{abstract}
 If $(X, \mcF, \D)$ is a projective rank two foliated log canonical triple such that $(X,B)$ is klt for some $0 \leq B \leq \D$, we show that we can run a $(K_\mcF +\Delta)$-MMP and any such MMP terminates with either a minimal model or Mori fiber space. Next, we establish a Bertini type lemma and adjunction for generalized foliated quadruples. Using these, we extend the full log canonical MMP to the setting of rank two NQC generalized foliated quadruples. Finally, we apply the generalized MMP to study the relation between different minimal models, namely, any two minimal models of a given foliated log canonical triple can be connected by a sequence of flops and in the boundary polarized case, the minimal models are good and only finitely many in number. 
\end{abstract}
\maketitle

\section{Introduction}
Let $X$ be a normal projective variety with possibly mild singularities. Then, thanks to the minimal model program, it is well known that we can apply a sequence of birational maps $X_0:=X \dashrightarrow X_1 \dashrightarrow X_2 \dashrightarrow \cdots \dashrightarrow X_n$, called divisorial contractions and flips, and this process is expected to end with a variety $X_n$ such that either 
\begin{itemize}
    \item $K_{X_n}$ is nef (then $X_n$ is called a minimal model of $X$), or
    \item there exists a surjective morphism $\phi: X_n \to Y$ with positive dimensional general fiber such that $-K_{X_n}$ is $\phi$-ample ($\phi: X_n \to Y$ is called a Mori fiber space).
\end{itemize}
This way, in order to understand the birational geometry of $X$, it suffices to study $X_n$.\\ 

In recent years, it has been observed that a similar story unfolds if we replace the canonical divisor of the variety $X$ with that of a foliation $\mcF$ on $X$. Indeed, one can define singularities of the foliation $\mcF$ by studying how its canonical divisor $K_\mcF$ changes under birational modifications of $X$ (the relevant definitions are recalled in the next section). In case $\mcF$ has mild singularities, we expect to be able to construct a finite sequence of divisorial contractions and flips $(X, \mcF) \dashrightarrow (X_1, \mcF_1) \dashrightarrow \cdots \dashrightarrow (X_n, \mcF_n)$ (where $\mcF_n$ denotes the transformed foliation on $X_n$) contracting or flipping only curves contained in leaves of the foliation, such that either $K_{\mcF_n}$ is nef or there exists a surjective morphism $f:X_n \to Z$ with positive dimensional general fiber such that $-K_{\mcF_n}$ is ample over $Z$ and $\mcF_n$ descends to a foliation on $Z$.\\

We present here a brief historical overview of the subject. The MMP for rank one foliations on surfaces was carried out by McQuillan \cite{McQ} and Brunella \cite{Bru}. The program for rank two foliations with dlt singularities (F-dlt) on threefolds was carried out by Cascini, Spicer and Svaldi in \cite{Spi}, \cite{CS} and \cite{SS22}. For rank one foliated threefolds, see \cite{CS20}; we remark that the geometry of the MMP here has some significant differences with the rank two case. Analogously to the classical case, the largest class of foliated singularities for which the MMP is still expected to hold are the log canonical (F-lc) ones. Roughly speaking, a log canonical foliation singularity is dlt if there aren't ``too many leaves" passing through it. While, as one might expect, any foliated lc singularity can be birationally modified to one that is dlt \cite[Theorem 8.1]{CS}, extending the MMP from the dlt to the lc setting is not straightforward. We remark that our techniques differ from those of the foundational works \cite{Spi}, \cite{CS} and \cite{SS22}. This is because dlt foliations have non-dicritical singularities \cite[Theorem 11.3]{CS}; in particular, this allows us to compare foliated discrepancy with the classical one and hence to relate $K_\mcF \cdot R$ with $K_X \cdot R$ when $R\subset \overline{NE}(X)$ is a $K_\mcF$-negative extremal ray (see for example, \cite[Lemma 8.14, 8.15]{Spi} for $R$ of divisorial type and \cite[Section 6]{CS} for $R$ of flipping type). When the foliation has dicritical singularities, such techniques don't work and other ideas are needed. Our approach to the log canonical MMP has actually been inspired by that of \cite{CS3} which deals with the algebraically integrable case. \\

We briefly explain some of our motivations behind extending the MMP to the log canonical setting. Many naturally occurring foliations, for example, foliations on $\mathbb{P}^n$ induced by linear projections are log canonical, but not dlt. More generally, it has been shown by Araujo and Druel \cite[Proposition 5.3]{AD13} that Fano foliations are never dlt. This makes the log canonical MMP important for studying questions related to their birational geomtry. Another limitation of dlt foliations appears in boundedness and moduli theory; see the recent work \cite[Section 3.2]{SSV}. Here if one considers foliations $\mcF$ with $K_\mcF$ big, it becomes necessary to run MMP for canonical divisors of the form $K_\mcF+\epsilon K_X$ for $\epsilon>0$ small but fixed; see \cite[Section 3]{SS23}. The induced foliations appearing on the canonical models of such divisors (see \cite[Corollary 3.4]{SS23}) may not have dlt singularities even if the starting foliation does. On the other end, sometimes it is necessary to run the MMP for ''adjoint foliated structures" of the form $tK_\mcF+(1-t)K_X$ for $t \in [0,1]$; see \cite{M7'} (indeed, even in this paper, we have run disguised versions of this type of MMP). In this case, the MMP need not preserve dltness of the foliation. The aim of this article is to establish the log canonical minimal model program for corank one foliations on threefolds with at worst klt singularities and provide some applications. We expect these results to be useful for the development of moduli theory for corank one foliations on threefolds and studying questions on boundedness of adjoint foliated structures (see \cite[section 9]{M7'}). The following is our first main result.

\begin{theorem}
    Let $(X, \mcF, \Delta)/U$ be a rank two projective lc foliated triple, where $\pi:X \to U$ is a projective morphism such that $\dim X =3$ and $(X,B)$ is klt for some $ 0 \leq B \leq \Delta$. Then we can run a $(K_\mcF+\Delta)$-MMP over $U$. Moreover, any such MMP $(X,\mcF,\Delta) \dashrightarrow (X_1, \mcF_1,\Delta_1) \dashrightarrow \cdots$ terminates with an lc foliated triple $(X_n, \mcF_n,\Delta_n)$ satisfying one of the following:
    \begin{enumerate}
        \item if $K_\mcF+\Delta$ is pseudoeffective over $U$, then $K_{\mcF_n}+\Delta_n$ is nef over $U$.
        \item if $K_\mcF+\Delta$ is not pseudoeffective over $U$, then there exists a contraction $X_n \to Z$ over $U$ with $\dim Z < \dim X_n$ whose fibers are tangent to $\mcF_n$ such that $K_{\mcF_n}+\Delta_n$ is anti-ample over $Z$.
    \end{enumerate}
\end{theorem}

In particular, we have shown that \cite[Question 8.4]{SS22} on the existence of log canonical flips for foliations has a positive answer in our setting. Using the theory of toric foliations \cite{CC23}, we also construct an explicit example of such a flip; see Example \ref{exflip}. The foliation in our example has dicritical singularities along the flipping curve. This contrasts the behaviour of rank one foliations which can not have any log canonical centers along an extremal curve of divisorial or flipping type; see \cite[Corollary 8.4]{CS20}.\\

We also extend this result, establishing the full MMP for (NQC) generalized foliated pairs (called generalized foliated quadruples in this  work). It has become apparent in recent years that generalized foliated quadruples (gfqs) are indispensible for the log canonical MMP; see for example, \cite{MMPlcintegrable}. Section $8$ is devoted to the MMP for (NQC) lc gfqs. The existence of this MMP relies on an elementary perturbation trick, which in many cases, can be used as a substitute for the failure of Bertini's Theorem. We state it here since this result may be of independent interest.

\begin{lemma}
    Let $(X, \mcF, \D, \M)$ be a dlt rank two generalized foliated quadruple, where $X$ is a $\mbQ$-factorial klt threefold. Let $A$ be an ample $\mbR$-divisor on $X$. Then there exists $\Theta \geq 0$ such that $(X, \mcF, \Theta)$ is a foliated lc triple and $K_\mcF+\Theta \sim_\mbR K_\mcF+\D+\M_X+A$.
\end{lemma}

The termination of the MMP for rank two gfqs, however, poses several technical challenges, mainly because of the failure of adjunction on invariant centers and the infinitude of log canonical centers for foliations. The main result of this section is the full MMP for lc gfqs:
\begin{theorem}
    Given a corank one lc gfq $(X, \mcF, \D, \M)/U$ where $(X, B, \M)$ is gklt for some $0\leq B \leq \D$ and $\dim X=3$, we can run a $(K_\mcF+\D+\M_X)$-MMP over $U$ and any such MMP terminates with a minimal model or a Mori fiber space.
\end{theorem}

In the case of dlt gfqs, the proof proceeds by showing that any sequence of flips $\phi_i:(X_i, \mcF_i, \D_i, \M)\dashrightarrow (X_{i+1}, \mcF_{i+1}, \D_{i+1}, \M)$ is eventually disjoint from all lc centers of the gfq (Special Termination). We first establish an adjunction type result for generalized foliated quadruples (see Proposition \ref{adj}) and use it to set up an inductive approach (based on the dimension of the lc centers) to Special Termination. \\

Here also, the behaviour of rank two gfqs contrasts that of rank one. In case $(X,\mcF, \D, \M)$ is a $\mbQ$-factorial rank one gfq, any $(K_\mcF+\D+\M_X)$-negative extremal curve $C \subset X$ satisfies $(\M_X \cdot C) \geq 0$ as has been proved recently in \cite[Proposition 4.2]{mengchu}. This relies on an extension lemma for vector fields, proved by Bogomolov and McQuillan; see \cite[Lemma 4.1]{mengchu}. Such results are not applicable to corank one foliations. Thus extending the MMP from foliated triples to gfqs seems to be much more challenging in the corank one case. Note that the minimal model program for generalized pairs in dimension $3$ has been completed only recently; see for example \cite{Chen-Tsak}.\\

We mention one more potential application of the MMP for lc gfqs developed here, that to studying connectedness of non klt loci of foliated triples $(X, \mcF, \D)$ when $-(K_\mcF+\D)$ is nef. See \cite[Theorem 3.2]{SS22} for some previous work in this direction and \cite[Theorem 3.2]{Bircon}, \cite[Theorem 1.1]{FS} for the classical case. \\

Note that some of our terminology differs from earlier literature on birational geometry of foliations: we call objects of the form $(X, \mcF, \Delta)$ foliated triples as in \cite{Liu},\cite{MMPlcintegrable} (rather than foliated pairs as in \cite{CS}, \cite{Spi}) and their generalized pair counterparts as generalized foliated quadruples as in \cite{Liu}, \cite{MMPlcintegrable} (rather than foliated generalized pairs as in \cite{Chaudas} and earlier versions of this paper).\\

Now we present some applications of the generalized log canonical MMP for foliations. The first important application is the following basepoint free theorem (see Theorem \ref{bpft} for a more general version):
\begin{theorem}\label{bpfintro}
    Let $(X,\mcF, \Delta)/U$ be a rank two projective lc foliated triple such that $(X,B)$ is klt for some $ 0 \leq B \leq \Delta$, $\dim X=3$ and $\pi: X \to U$ is a projective morphism. Let $L$ be a nef over $U$ $\mbR$-divisor on $X$ such that $L-(K_\mcF+\Delta)$ is ample over $U$. Then $L$ is semiample over $U$. 
\end{theorem}

A similar result for algebraically integrable foliations has recently been obtained in \cite{MMPlcintegrable}. It thus seems reasonable to expect that on varieties admitting a foliated log Fano  structure, any nef divisor is semiample. We remark that based on recent results obtained by Cascini et al. \cite[Theorem 2.4.2]{M7'}, it seems reasonable to expect that such varieties are in fact of log Fano type. Since the proof of \cite[Theorem 2.4.2]{M7'} involves running MMP for adjoint foliated structures of the form $tK_\mcF+(1-t)K_X$ (or rather, generalized pair versions of these objects), the techniques and results of this paper would turn out to be crucial for establishing such a result.
Combining Theorem \ref{bpfintro} with convex geometric arguments involving Shokurov polytopes (as in \cite{BCHM}), we show that the number of minimal models of a boundary polarized lc foliated triple (i.e. of the form $(X,\mcF, \Delta=A+B)$, where $A$ is ample and $B \geq 0$) is finite; see Theorem \ref{LGLCM}.\\

As another application of the  generalized log canonical MMP, we show that any two foliated minimal models obtained as outcomes of a log canonical MMP can be connected by  a sequence of flops. Note that this was not known before the appearance of this article, even in the dlt case (see \cite{VJ} for some previous work in this direction).

\begin{theorem}\label{flopintro}
Let $(X,\mcF, \Delta)/U$ be a rank two projective $\mbQ$-factorial foliated lc triple such that $\dim X =3$ and $(X, B)$ klt for some $0 \leq B \leq \Delta$ and $\alpha_i: (X, \mcF, \Delta) \dashrightarrow (X_i, \mcF_i, \Delta_i)$, $i=1,2$ two minimal models obtained as outcomes of some $(K_\mcF +\Delta)$-MMPs over $U$, say $\alpha_i:X \dashrightarrow X_i$. Then the induced birational map $\alpha: X_1 \dashrightarrow X_2$ can be realized as a sequence of $(K_{\mcF_1}+\Delta_1)$-flops over $U$.
    
\end{theorem}

For proving Theorem \ref{bpfintro} and Theorem \ref{flopintro}, we have to deal with divisors of the form $K_\mcF+ B+A$, where $A$ is ample. Since a lc foliated triple can have infinitely many log canonical centers, it is perhaps not surprising that the analogues of classical Bertini-type results can fail: if $(X,\mcF, B)$ is lc and $A$ an ample $\mbR$-divisor, it may not be possible to find $\Delta \geq 0$ such that $K_\mcF+\Delta \sim_\mbR K_\mcF+B+A$ and $(X,\mcF, \Delta)$ is lc. As observed earlier in \cite{Chaudas}, the category of generalized foliated quadruples (called generalized foliated pairs in loc. cit.) is flexible enough to deal with such pathologies: instead of considering $(X,\mcF, B+A)$ as an usual pair, we can think of $(X,\mcF,B,A)$ as a generalized foliated quadruple (gfq in short), where we put $A$ in the moduli part; note this gfq is automatically log canonical; see Definition \ref{def:g-pair}. Indeed, it has recently become apparent that gfqs are the right category for carrying out the log canonical MMP for foliations; see for instance \cite{MMPlcintegrable}. Because of this necessity, we need to develop the minimal model program for gfqs. Theorem \ref{bpfintro} implies that minimal models of a boundary polarized foliated lc triple are always good. However, it is worth pointing out that foliated minimal models need not be good in general; see for example \cite[Example 5.4, 5.5]{ACSS}. It would be interesting to find weaker conditions on the boundary which enforce the goodness of minimal models. At present, a minimal model theory for arbitrary foliations on higher dimensional varieties seems out of reach, mainly owing to the absence of a log resolution theorem for foliations.

\section{Preliminaries}
\begin{definition} [Basics on Foliations; see \cite{Spi}, \cite{Dru}]
Let $X$ be a normal quasiprojective variety. A \emph{foliation} $\mcF$ on $X$ is a coherent subsheaf $\mcF \subset T_X$ of the tangent sheaf which is closed under Lie brackets and such that $T_X/\mcF$ is torsion free. Given a foliation $\mcF$ on $X$, $\rm{rank}(\mcF)$ is by definition its rank as a coherent sheaf and $\rm{corank(\mcF):=\dim X-\rm{rank}(\mcF)}$. When $X$ is smooth, the \emph{singular locus} of $\mcF$ is the locus where $\mcF$ fails to be a sub-bundle of $T_X$ (When $X$ is not smooth see \cite[Defination 3.4]{AD}). It is a big open subset of $X$ whose complement has codimension at least $2$. In particular, there exists a big open $U \subset X$ where $X$ and $\mcF$ are both smooth. The \emph{canonical divisor} of $\mcF$, denoted $K_\mcF$ is then the Zariski closure of det$(\mcF|_U)^*$. A subvariety $W \subset X$ is called \emph{$\mcF$-invariant} if for any local section $\partial$ of $\mcF$ over some $U \subset X$ open, $\partial(I_{W\cap U}) \subset I_{W \cap U}$, where $I_{W\cap U}$ is the defining ideal. If $P \subset X $ is a prime divisor, then we define $\epsilon(P)=0$ if $P$ is $\mcF$-invariant and $\epsilon(P)=1$ otherwise.\\

Let $f:X \dashrightarrow Y$ be a dominant rational map between normal varieties and $\mcF$ a foliation on $Y$. Then as in \cite[Section 3.2]{Dru}, we can define the pullback foliation $f^{-1}\mcF$. The pullback of the zero foliation on $Y$ is known as the foliation induced by $f$. Such foliations are called \emph{algebraically integrable}. 
If $f$ is a morphism, note that the foliation induced by $f$ is nothing but the relative tangent sheaf $T_{X/Y}$ over the smooth locus of $f$. If $f: X \dashrightarrow Y$ is birational and $\mcF$ a foliation on $X$, then we have an induced foliation on $Y$ defined by $g^{-1}\mcF$ where $g:= f^{-1}$.\\

If $f:X \to Y$ is an equidimensional morphism of normal varieties and $\mcF$ is a foliation on $X$ such that $\mcF =f^{-1}\mcG$ for some foliation $\mcG$ on $Y$, then $K_\mcF \sim_\mbQ f^*(K_{\mcG})+K_{X/Y}-R$, where $R := \sum_{P: \epsilon(P)=0}(f^*P-f^{-1}P)$ (here $P$ ranges over all $\mcG$-invariant prime divisors in $Y$ and $f^{-1}P:= (f^*P)_{red}$). See \cite[2.9]{Dru17} for details.\\

Let $\mcF$ be a corank one foliation on $X$. We say that a subvariety $V \subset X$ is \emph{tangent} to $\mcF$ if for any birational morphism $\pi:X' \to X$ and any divisor $E$ on $X'$ with center $V$, $E$ is $\mcF'$-invariant, where $\mcF'$ is the pullback of $\mcF$.
\end{definition}

We now explain how to extend the classical definitions of singularities of generalized pairs \cite[Section 4]{BZ} to the foliated setting. They were first introduced by \cite{Liu}. For the reader unfamiliar with the relevant terminology on b-divisors, we refer to \cite{BZ}.

\begin{definition}[Generalized foliated quadruples and their singularities]\label{def:g-pair} A \emph{generalized foliated quadruple} $(X,\mcF, B,\M)/U$ consists of the data of a normal projective variety $X$ equipped with a projective contraction morphism $\pi:X \to U$ to a projective variety $U$, a foliation $\mcF$ on $X$, a $\mathbb{R}$-divisor $B \geq 0$ and a b-nef over $U$ $\mathbb{R}$-divisor $\M$ on X such that $K_\mcF+B+\M_X$ is $\mathbb{R}$-Cartier.
Let $\pi:Y \to X$ be a higher model of $X$ to which $\M$ descends and $\mcF_Y$ the pulled back foliation on $Y$. Define $B_Y$ by the equation \begin{center}
    $K_{\mcF_Y}+B_Y+\M_Y = \pi^*(K_\mcF+B+\M_X)$,
\end{center} 
where $\mcF_Y:=\pi^{-1}\mcF$. If for any prime divisor $E$ on any such $Y$, mult$_EB_Y \leq \epsilon(E)$, then we say that $(X,\mcF,B,\M)$ is \emph{log canonical} (lc in short). For $\pi:X' \to X$ a higher model and prime divisor $E\subset Y$, we define its \emph{discrepancy} $a(E, X, \mcF, B,\M):=-\rm{coeff}_EB_{X'}$, where $B_{X'}$ is defined by $K_{\mcF'}+B_{X'}+\M_{X'}= \pi^*(K_\mcF+B+\M_X)$.
In this paper, we use the abbreviation \emph{gfq} for generalized foliated quadruples and when $\M=0$, we refer to $(X, \mcF, B)$ as a \emph{foliated triple}. A gfq $(X, \mcF, B,\M)$ is called \emph{NQC} if for any higher model $\pi:Y \to X$ of $X$ to which $\M$ descends, $\M_Y$ is a positive linear combination of nef over $U$ $\mbQ$-Cartier $\mbQ$-divisors (in other words, $\M_Y$ is  NQC over $U$).
\end{definition}
For the convenience of the reader, next we include the definition of foliated log smooth pairs, which will be needed to define dlt gfqs.
\begin{definition}\cite[Definition $3.1$]{CS}
Given a corank one foliated triple $(X,\mathcal{F},B)$ we say that $(X,\mathcal{F},B)$ is foliated log smooth provided the following hold:
\begin{enumerate}
    \item $(X,B)$ is log smooth 
    \item $\mathcal{F}$ has simple singularities \cite[Definition 2.8]{CS}, and 
    \item If $S$ is the support of non $\mathcal{F}$-invariant components of $B$, $p\in S$ is a closed point and $\Sigma_1,\Sigma_2...,\Sigma_k$ are the (possibly formal) $\mathcal{F}$-invariant divisors passing through $p$, then $\Supp S \cup \Sigma_1\cup...\cup \Sigma_k$ is a normal crossing divisor
\end{enumerate}
\end{definition}
The existence of a foliated log smooth model has been instrumental in the development of the MMP for corank one foliations on threefolds \cite{Spi}, \cite{CS}. Next, we define dlt singularities for generalized foliated quadruples. They serve as a close approximation of simple singularities, but with the added feature of being preserved by the MMP.
\begin{definition}
    Let $(X,\mathcal{F},B,\M)$ be a generalized foliated quadruple. We say that $(X,\mathcal{F},B,\M)$ is \textit{dlt} if
    \begin{enumerate}
        \item $(X,\mcF, B, \M)$ is lc, and
        \item There exist a resolution $\pi:Y\rightarrow X$ on which $\M$ descends satisfying the following properties:\begin{enumerate}\item if $B_Y$ is defined by $K_{\mcF_Y}+B_Y+\M_Y=\pi^*(K_\mcF+B+\M_X)$, then $(Y,\mathcal{F}_Y,B_Y)$ is foliated log smooth, and
        \item $\pi$ only extracts divisors $E$ with $\text{mult}_EB_Y<\epsilon (E)$. In other words, $\pi$ only extracts klt places of $(X, \mcF, B, \M)$.
        \end{enumerate}
        
    \end{enumerate}
\end{definition}

Finally, we recall the definition of flips and log canonical models for gfqs:

\begin{definition}
    Let $(X,\mathcal{F},\D,\M)/U$ be an lc generalized foliated quadruple. A projective birational morphism $f:X \rightarrow Z$ over $U$ is called a $(K_\mcF+\D+\M_X)$-\emph{flipping contraction} if $\rho(X/Z)=1$, $f$ is small (i.e. has exceptional locus of codim at least $2$) and $-(K_\mcF+\D+\M_X)$ is ample over $Z$. Let $f^{+}: X^{+} \rightarrow Z$ be a projective birational morphism over $U$ from a normal projective variety $X^{+}$ and $\phi: X \dashrightarrow X^{+}$ the induced birational map. Then $f^{+}$ is a \emph{flip} of $f$ if $f^{+}$ is small and $\phi_*(K_\mcF+\D+\M_X)$ is $\mathbb{R}$-Cartier and ample over $Z$. \\
    
    Let $\phi: (X, \mcF,\D, \M) \dashrightarrow (X',\mcF', \D', \M)$ be a birational contraction over $U$ which is $(K_\mcF+\D+\M_X)$-non-positive. Then $(X',\mcF', \D', \M)$ is called the \emph{log canonical model} of $K_\mcF+\D+\M$ over $U$ if $K_{\mcF'}+\D'+\M_{X'}$ is ample over $U$.
\end{definition}

\section{Cone Theorem}
\begin{theorem}\label{lccone}
    Let $(X,\mathcal{F},\Delta)/U$ be a projective lc rank two foliated triple, where $\dim X =3$. Then there exists a countable collection of rational curves $\lbrace C_i\rbrace_{i \in I} $ on $X$ tangent to $\mcF$ such that.
    \begin{enumerate}
        \item $\overline{NE}(X/U)=\overline{NE}(X/U)_{(K_{\mathcal{F}}+\Delta)\geqslant 0}+\sum_{i \in I} \mbR_+ [C_i]$
        \item $-6\leqslant (K_X+\Delta)\cdot C_i<0$
        \item For any relatively ample divisor $H$ over $U$, $(K_{\mathcal{F}}+\Delta+H)\cdot C_i\leqslant 0$ for all but finitely many $i$.
    \end{enumerate}
\end{theorem}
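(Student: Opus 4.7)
The plan is to reduce to the F-dlt cone theorem, already established for F-dlt foliated pairs on $\mbQ$-factorial threefolds with klt underlying pair in \cite{Spi,CS,SS22}, via an F-dlt modification of $(\mcF,\Delta)$. Such a modification exists by \cite[Theorem 8.1]{CS}: there is a projective birational morphism $\pi \colon Y \to X$ with $Y$ $\mbQ$-factorial such that, setting $\mcF_Y := \pi^{-1}\mcF$ and defining $\Delta_Y$ by $K_{\mcF_Y} + \Delta_Y = \pi^*(K_\mcF+\Delta)$, the pair $(\mcF_Y, \Delta_Y)$ is F-dlt. Because $(X,\Delta)$ is klt, one finds an effective $\mbQ$-divisor $\Theta$ with $(Y, \Theta)$ klt. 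The F-dlt cone theorem applied to $(\mcF_Y, \Delta_Y)$ on $Y$ yields countably many rational curves $\{C_j'\}_{j\in J}$ tangent to $\mcF_Y$, a decomposition
\[
\overline{NE}(Y) = \overline{NE}(Y)_{K_{\mcF_Y}+\Delta_Y \geq 0} + \sum_j \mbR^+[C_j'],
\]
a bound on $(K_Y+\Theta)\cdot C_j'$, and the corresponding accumulation property for ample divisors.

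To descend to $X$, observe that $\pi^*(K_\mcF+\Delta) = K_{\mcF_Y}+\Delta_Y$ is $\pi$-numerically trivial, so no $\pi$-contracted curve lies in a $(K_{\mcF_Y}+\Delta_Y)$-negative extremal ray; each $C_j'$ therefore maps to a rational curve $C_j := \pi(C_j')$ on $X$, necessarily tangent to $\mcF$ since $\mcF_Y = \pi^{-1}\mcF$. The projection formula gives $(K_\mcF+\Delta)\cdot\pi_*C_j' = (K_{\mcF_Y}+\Delta_Y)\cdot C_j' < 0$ with $\pi_*C_j' = d_j[C_j]$, $d_j := \deg(\pi|_{C_j'}) \geq 1$. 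Since $\pi_* \colon \overline{NE}(Y) \twoheadrightarrow \overline{NE}(X)$ carries the $(K_{\mcF_Y}+\Delta_Y)$-nonnegative part into $\overline{NE}(X)_{K_\mcF+\Delta \geq 0}$ by the projection formula, property $(1)$ follows. For property $(3)$, fix ample $H$ on $X$, pick ample $A$ on $Y$, apply the accumulation property on $Y$ to the ample divisor $H_Y := \pi^*H + A$, and translate back to $X$ using the projection formula together with $A\cdot C_j' > 0$.

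The main obstacle is property $(2)$, the bound $(K_X+\Delta)\cdot C_j \geq -6$. A naive push-forward of $(K_Y+\Theta)\cdot C_j' \geq -6$ is insufficient: one obtains $d_j(K_X+\Delta)\cdot C_j = (K_Y+\Theta)\cdot C_j' - F\cdot C_j'$, where $F$ is the effective $\pi$-exceptional divisor with $K_Y + \Theta = \pi^*(K_X+\Delta) + F$, and although $F \cdot C_j' \geq 0$ whenever $C_j'\not\subset \Supp(F)$, nothing prevents $C_j'$ from being contained in the exceptional locus, producing a positive contribution that breaks the bound. The resolution is to apply foliated bend-and-break directly on $X$, which is available because $(X,\Delta)$ is klt: for each $(K_\mcF+\Delta)$-negative extremal ray $R$, the arguments of \cite{Spi,SS22} produce a rational curve $C$ in $R$, tangent to $\mcF$, satisfying $(K_X+\Delta)\cdot C \geq -2\dim X = -6$. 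Replacing each $C_j$ by such a representative completes the verification of all three properties.
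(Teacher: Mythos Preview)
Your overall strategy---pass to an F-dlt modification and descend the F-dlt cone theorem---is exactly the paper's approach. For part (1), the paper argues by contradiction via a linear-algebra lemma about pulling back extremal faces along the surjection $\pi_*$ (Lemma~\ref{lin}, following \cite{wald}), rather than your direct push-forward; your version is correct in spirit, but you should be explicit that push-forward only yields the \emph{closure} of the right-hand side, after which non-accumulation (part (3)) upgrades it to an equality. Your argument for (3) via the ample class $\pi^*H+A$ is fine and essentially equivalent to the paper's compactness argument.

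The real divergence is in part (2). The statement as printed contains a typo: the bound is on $(K_\mcF+\Delta)\cdot C_i$, not $(K_X+\Delta)\cdot C_i$, as the paper's own proof makes clear. With the intended divisor the argument is one line: since $C_i$ is $\pi_*C_i'$ taken with reduced structure and both intersection numbers are negative,
\[
(K_\mcF+\Delta)\cdot C_i \;\ge\; (K_\mcF+\Delta)\cdot \pi_*C_i' \;=\; (K_{\mcF_Y}+\Delta_Y)\cdot C_i' \;\ge\; -6
\]
by the F-dlt bound and the projection formula. Your elaborate detour is unnecessary.

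If instead one insists on the literal reading with $K_X$, your proposed fix has two gaps. First, you invoke foliated bend-and-break ``directly on $X$'' citing \cite{Spi,SS22}, but those arguments are carried out for F-dlt pairs; applying them to the F-lc pair $(\mcF,\Delta)$ without first passing to the modification is precisely what this section is meant to justify, so the appeal is circular. Second, your replacement step is ill-posed: the images $C_j=\pi(C_j')$ need not span extremal rays of $\overline{NE}(X)$---only their preimages were extremal on $Y$---so there is no well-defined ``extremal ray $R$ containing $C_j$'' to feed into bend-and-break, and even if there were, you offer no reason why the new representative should satisfy $(K_X+\Delta)\cdot C<0$. Read part (2) with $K_\mcF$ and use the one-line descent above.
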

\begin{proof}
   We first prove the theorem in the case $U$ is a point. Thanks to \cite{Spi}, we have the full cone theorem for dlt rank two foliated triples. To prove this in the log canonical setting, first we will need a linear algebra lemma; see \cite[Lemma 3.1]{wald}.
    \begin{lemma}\label{lin}
        Let $f:V\rightarrow W$ be a surjective linear map of finite dimensional vector spaces. Suppose $C_V\subset V$ and $C_W \subset W$ are closed convex cones of maximal dimension and $H\subset W$ is a linear subspace of codimension 1. Assume that $f(C_V)=C_W$ and $C_W\cap H \subset \partial C_W$. Then $f^{-1}H\cap C_V=f^{-1} (H\cap C_W)\cap C_V$ and $f^{-1}H\cap C_V \subset \partial C_V$.
    \end{lemma}
    
    $(X,\mathcal{F},\Delta)$ is log canonical, hence by \cite[Theorem $8.1$]{CS} there exists a F-dlt modification $(Y,\mathcal{G},\Delta_Y)$ such that $f^*(K_{\mathcal{F}}+\Delta)=K_{\mathcal{G}}+\Delta_Y$, where $f:Y\rightarrow X$ is the induced morphism. There is a surjective map of vector spaces $f_*:N_1(Y)\rightarrow N_1(X)$, which induces a surjection $f_*(\overline{NE}(Y))=\overline{NE}(X)$. By the cone theorem for the $\mbQ$-factorial foliated dlt triple $(Y,\mcG, \Delta_Y)$, we know that there is a countable collection of rational curves $C_i^Y$ satisfying 
    \[
    \overline{NE}(Y)=\overline{NE}(Y)_{(K_{\mathcal{G}}+\Delta_Y)\geqslant 0}+\sum \mbR^+\cdot [C_i^Y].
    \]
Let $C_i$ be the countable collection of rational curves on $X$ given by $f_*C_i^Y$ with reduced structure. Suppose $\overline{NE}(X)\neq \overline{NE}(X)_{(K_{\mathcal{F}}+\Delta)\geqslant 0}+\sum \mbR_{\geqslant 0}[C_i]$. Then there is some $\mbR$-Cartier divisor $D$ which is positive on the right hand side of the above equation and non-positive somewhere on $\overline{NE}(X)$. Let $A$ be an ample divisor and $\lambda=\text{inf}\lbrace t:D+tA \text{ is nef }\rbrace$. Then by Kleiman's criterion $D+\lambda A$ takes value $0$ somewhere on $\overline{NE}(X)\setminus \lbrace0\rbrace$. By replacing $D$ by $D+\lambda A$ we may assume that $D_{=0}$ intersects $\overline{NE}(X)$ non-trivially. So $D_{=0}$ cuts out some extremal face $F$ of $\overline{NE}(X)$. By Lemma \ref{lin} we have 
\[
F_Y:=f_*^{-1}F\cap \overline{NE}(Y)=f_*^{-1}D_{=0}\cap \overline{NE}(Y)
\]
is some non-empty extremal face of $\overline{NE}(Y)$, which is $(K_{\mathcal{G}}+\Delta_{Y})$-negative away from the lower dimensional face $f_*^{-1}(0)$. But any such extremal face contains a negative extremal ray $R=\mbR^+\cdot [C_i^Y]$. Then $D_{=0}$ contains one of the $C_i$ which contradicts the assumption of inequality.\\

Now to show the inequality in part $(2)$ observe that
\[
0<\frac{(K_{\mathcal{F}}+\Delta)\cdot C_i}{(K_{\mathcal{F}}+\Delta)\cdot f_*C_i^Y}\leqslant 1
\] 
as $C_i$ is $f_*C_i^Y$ with reduced structure. Now we know that $(K_{\mathcal{F}}+\Delta)\cdot C_i$ and $(K_{\mathcal{F}}+\Delta)\cdot f_*C_i^Y$ are both negative. Hence by projection formula and the $\mbQ$-factorial dlt cone theorem, this gives the desired bound: $(K_\mcF + \Delta) \cdot C_i \geq (K_\mcG+ \Delta_Y) \cdot C_i^{Y} \geq -6$ .\\

To prove part $(3)$ we first show that the negative extremal rays do not accumulate in $(K_{\mathcal{F}}+\Delta)<0$. Suppose otherwise, some sequence $R_i$ converging to a $(K_{\mathcal{F}}+\Delta)$-negative ray $R$. Let $R_i^Y$ be an extremal ray in $\overline{NE}(Y)$ satisfying $f_*R_i^Y=R_i$. Such a ray exists by definition of $R_i$. By compactness of the unit ball in $\overline{NE}(Y)$ a subsequence of $R_i^Y$ converges to a ray $R^Y$. This must satisfy $f_*R^Y=R$, and so by projection formula, $R^Y$ is $(K_{\mathcal{G}}+\Delta_Y)$-negative. This contradicts the cone theorem for dlt foliated triples.\\

Finally, let $A$ be an ample $\mbR$-divisor on $X$. Suppose there are infinitely many $C_i$ with $(K_{\mathcal{F}}+B+A)\cdot C_i<0$. By compactness, some subsequence of the corresponding $R_i$ converges to a ray $R$. This must satisfy $(K_{\mathcal{F}}+\Delta+A)\cdot R\leqslant 0$, but $R\subset \overline{NE}(X)$ so this implies $(K_{\mathcal{F}}+\Delta)\cdot R<0$, which contradicts that the negative extremal rays do not accumulate in $(K_{\mathcal{F}}+\Delta)<0$.\\

Now, we prove the theorem in the relative set-up. We can deduce this from the projective case using standard techniques as in \cite[Section $3.6$]{KM98}. Namely, given any $\zeta\in \overline{NE}(X/U)$ we can write $\zeta=\eta+\sum r_j[C_j]$, where a priori $\eta\in \overline{NE}(X)_{K_{\mathcal{F}}+\Delta\geqslant 0 }$ and $C_j$ are $(K_{\mathcal{F}}+\Delta)$-negative. Then we can argue as in \cite[3.28]{KM98} to get $\eta\in \overline{NE}(X/U)$ and $\pi_*[C_j]=0$. This gives us the relative cone theorem.

\end{proof}
\section{Contraction Theorem}
\begin{theorem}\label{CT}
    Let $(X,\mcF, \Delta)/U$ be a corank one log canonical foliated triple such that $(X,B)$ is klt for some $0 \leq B \leq \Delta$ and $\dim X =3$. Let $R \subset \overline{NE}(X/U)$ be a $(K_\mcF+\Delta)$-negative exposed extremal ray (see \cite[Definition 6.5]{Spi}). Then there exists a contraction $c_R: X \to Z$ over $U$ associated with $R$, where $Z$ is a normal projective variety over $U$ of klt type. Moreover, $c_R$ satisfies the following properties:
    \begin{enumerate}
        \item If $L\in \Pic X$ is such that $L \equiv_Y 0 $, then there exists $M\in \Pic Z$ with $c_R^*M=L$,
        \item If $X$ is $\mbQ$-factorial and $c_R$ is a divisorial or Fano contraction, then $Z$ is also $\mbQ$-factorial.
    \end{enumerate}
    \end{theorem}
    \begin{proof}
    For simplicity, we deal with the absolute case first (i.e $U$ is a point). 
        Many of the ideas of the proof were inspired by \cite[Theorem 3.2]{CS3}. Since $(X,B)$ is klt, we have a small $\mbQ$-factorialization $h:\Tilde{X}\rightarrow X$. Let $\pi:\overline{X}\rightarrow X$ be a F-dlt modification which exists due to \cite[Theorem $8.1$]{CS}. By construction of the F-dlt modification, we may assume that $\pi$ factors through $h$ and we denote $\pi':\overline{X}\rightarrow \Tilde{X}$ the induced morphism. Let $\overline{\mathcal{F}}$ be the induced foliation on $\overline{X}$. Let $\Gamma=\pi_*^{-1}B$ and $\overline{\Delta}=\pi_*^{-1}\Delta +\sum \epsilon(E_i)E_i$, where the last sum runs over all $\pi$-exceptional divisors. Then we have \[
        K_{\overline{\mathcal{F}}}+\overline{\Delta}=\pi^*(K_{\mathcal{F}}+\Delta)
        \]
        We may also write 
        \[
        K_{\overline{X}}+\Gamma+E_0=\pi^*(K_X+B)+F_0
        \]
        
    where $E_0$ and $F_0$ are effective $\pi$-exceptional divisors with no common components. Note that $(\overline{X}, \Gamma +E_0)$ is klt by \cite [Lemma 3.16]{CS} (using the fact that $(\overline{\mcF}, \Gamma + (E_0)_{non-inv})$ is F-dlt). Since $\Tilde{X}$ is $\mbQ$-factorial there exists a $\pi'$-exceptional Cartier divisor $B_0\geqslant 0$ such that $-B_0$ is $\pi'$-ample. Since $(\overline{X}, \Gamma +E_0)$ is klt there exists $\delta>0$ sufficiently small such that $(\overline{X},\Gamma+E_0+\delta B_0)$ is klt. Let $E:=E_0+\delta B_0$ and $F:=F_0+\delta B_0$. In particular we have that \begin{center}
        
    $K_{\overline{X}}+\Gamma+E=\pi^*(K_X+B)+F$ ($*$)
    \end{center}
    There exists a nef $\mbR$-Cartier divisor $H_R$ such that $H_R^{\perp}\cap \overline{NE}(X)=R$ and if $D$ is a Cartier divisor on $X$ such that $R\subset D^{\perp}$ then $(H_R+tD)^{\perp}\cap \overline{NE}(X)=R$ for any sufficiently small $t>0$ (thanks to Theorem \ref{lccone}). Since $R$ is exposed, we may write $H_R=K_{\mathcal{F}}+\Delta+A$ where $A$ is an ample $\mbR$-divisor. Let $\overline{A}=\pi^*A$. By construction we know that $K_{\overline{\mathcal{F}}}+\overline{\Delta}+\overline{A}=\pi^*H_R$ is nef. Now consider any curve $C'$ such that $[\pi(C')]\in R$ (note that the curve exists by Cone theorem for $(\mcF, \Delta)$). Then we have that $(K_{\overline{\mathcal{F}}}+\overline{\Delta}+\lambda\overline{A})\cdot C'<0$ for any $\lambda<1$, hence $K_{\overline{\mathcal{F}}}+\overline{\Delta}+\lambda\overline{A}$ is not nef.\\
    
    First assume that $H_R$ is not big. Let $\nu$ be its numerical dimension. We define $D_i:=\pi^*H_R$ for all $1\leqslant i\leqslant \nu+1$ and $D_i := \overline{A}$ for $\nu+1 < i \leq 3$. Then we have $D_1\cdot D_2\cdot D_3=(\pi^*H_R)^{\nu+1}\cdot\overline{A}^{3-\nu-1}=0$ and \[
    -(K_{\overline{\mathcal{F}}}+\overline{\Delta})\cdot D_2\cdot D_3>0
    \]
Now, by \cite[Corollary $2.28$]{Spi}, through a general point of $\overline{X}$, there exists a rational curve $\overline{C}$ which is tangent to $\overline{\mathcal{F}}$ such that $\pi^*H_R\cdot \overline{C}=0$. Thus it follows that $(K_{\overline{\mathcal{F}}}+\overline{\Delta}+\overline{A})\cdot \overline{C}=\pi^*H_R\cdot \overline{C}=0$. Hence $\mbR_{\geqslant 0}[\pi(\overline{C})]=R$, with $loc(R)=X$. By the same arguments as in the proof of \cite[Lemma 8.12]{Spi}, there exists a fibration $\phi: \overline{X} \to Z$ with $\dim Z < 3$ such that $\overline{\mcF}$ descends to a foliation $\mathcal{G}$ on $Z$ and the curves $\overline{C}$ as above are contained in the fibers of $\phi$. With this at hand, there exists a big open subset $U \subset Z$ which is smooth such that $(K_{\mathcal{\overline{F}}}+\overline{\Delta})|_{f^{-1}(U)}\sim_{\mbQ,U} (K_{\overline{X}}+\overline{\Delta}+G)|_{f^{-1}(U)}$, where $G= \phi^{-1}(B_\phi)_{\mcG\rm{-inv}}$ is the inverse image of the $\mathcal{G}$-invariant part of the branch divisor of $\phi$. In particular, $(K_\mathcal{\overline{F}}+\overline{\Delta}) \cdot \overline{C} = (K_{\overline{X}}+\overline{\Delta}+G)\cdot \overline{C} <0$, as $\overline{C}$ is a covering family of curves. Let $G'$ denote the reduced sum of $\overline{\mcF}$-invariant $\pi$-exceptional divisors which are not components of $G$. Then if $\epsilon$ is sufficiently small, $(K_{\overline{X}}+\overline{\Delta}+G +\epsilon G') \cdot \overline{C} <0$. We can adjust $\delta$ above such that $\Gamma +E \leq \overline{\Delta}+G+\epsilon G'$. With this, we have $(K_{\overline{X}}+\Gamma+E) \cdot \overline{C} \leq(K_{\overline{X}}+\overline{\Delta}+G +\epsilon G')\cdot \overline{C} <0$. This along with equation ($*$) above implies that $(K_X+B) \cdot R <0$. So we can use the klt contraction theorem to contract $R$. \\


We now assume that $H_R$ is big. Since $(\overline{X},\overline{\mathcal{F}},\overline{\Delta})$ is a dlt foliated triple,
we can run a $(K_{\overline{\mathcal{F}}}+\overline{\Delta})$-MMP with the scaling of $\overline{A}=\pi^*A$.  This MMP produces a sequence of $(K_{\overline{\mathcal{F}}}+\overline{\Delta})$-flips and divisorial contractions $\phi_i:\overline{X}_i\dashrightarrow \overline{X}_{i+1}$, and a sequence of rational numbers $\lambda_1=1\geqslant\lambda_2 \cdots$, such that if $\overline{\mathcal{F}_i}$ is the induced foliation on $\overline{X}_i$ and $\overline{A}_i$ and $\overline{\Delta}_i$ are strict transforms of $\overline{A}$ and $\overline{\Delta}$ on $\overline{X}_i$ then $(\overline{X}_i,\overline{\mathcal{F}_i},\overline{\Delta}_i)$ is dlt foliated triple, where $\lambda_i:=\rm{inf}\lbrace t\geqslant 0|K_{\overline{\mathcal{F}}}+\overline{\Delta}_i+t\overline{A}_i$ is nef$\rbrace$. As any sequence of $(K_{\overline{\mathcal{F}}}+\overline{\Delta})$-MMP terminates by \cite{SS22}, there exists an $i_0$ such that $\lambda_{i_0}<1$. After possibly replacing $i_0$ by a smaller number, we may assume $i_0$ is the smallest such positive integer. Set $\lambda:=\lambda_{i_0}$, $\overline{X}':=\overline{X}_{i_0}$ and $\phi:\overline{X}\dashrightarrow \overline{X}'$ be the induced birational map. Let $\overline{\mcF}':=\phi_*\overline{\mcF}$, $\Gamma':=\phi_*\Gamma$ and similiarly $E'$, $\overline{\Delta}'$ and $\overline{A}'$. By \cite[Lemma 3.33, Lemma 3.16]{CS}
it follows that $(\overline{X}',\Gamma'+E')$ is klt.\\

By definition of the MMP  with scaling and by our choice of $i_0$, we have that $K_{\overline{\mcF}'}+\overline{\Delta}'+t\overline{A}'$ is nef for all $\lambda\leqslant t\leqslant 1$ and each step of this MMP up until $\overline{X}'$ is $(K_{\overline{\mathcal{F}}}+\overline{\Delta}+t\overline{A})$-negative for $t<1$ and $(K_{\overline{\mathcal{F}}}+\overline{\Delta}+\overline{A})$-trivial. Thus $\phi_*\pi^*H_R=K_{\overline{\mathcal{F}}}+\overline{\Delta}+\overline{A}$ is nef. By \cite[Lemma $3.1$]{CS3}, we have a containment $(K_{\overline{\mcF}'}+\overline{\Delta}'+t\overline{A}')^{\perp}\cap \overline{NE}(\overline{X}')\subset (\phi_*\pi^*H_R)^{\perp}\cap \overline{NE}(\overline{X}')$ for all $\lambda<t\leqslant 1$.\\
Fix a rational number $\lambda'$ such that $\lambda<\lambda'<1$ and a sufficiently small rational number $s>0$ such that
\begin{enumerate}
    \item $K_{\mathcal{F}}+\Delta+\lambda' A$ is big
    \item if we set $A_0=(1-\lambda')A-s(K_X+\Delta)$, then $A_0$ is ample, the stable base locus of $H_R-A_0$ coincides with $\mathbb{B}_+(H_R)$ and $H_R-A_0$ is positive on every extremal ray of $\overline{NE}(X)$ except $R$.
    \item $s<\frac{1}{2m\dim X}$ where $m$ is the Cartier index of $K_{\overline{\mathcal{F'}}}+\overline{\Delta}'+\lambda'\overline{A}'$, and
    \item if we set $K=(K_{\overline{\mathcal{F}}}+\overline{\Delta}+\lambda'\overline{A})+s(K_{\overline{X}}+\Gamma+E)$, then $K$ is big and $\phi$ is $K$-negative.
\end{enumerate}
Set $K'=\phi_*K$. Since $K_{\overline{\mcF}'}+\overline{\Delta}'+\lambda'\overline{A}'$ is nef and big by our choice of $\lambda'$, we may run a $K'$-MMP which is $(K_{\overline{X}'}+\Gamma'+E')$-negative and $(K_{\overline{\mathcal{F}}}+\overline{\Delta}'+\lambda'\overline{A}')$-trivial. Call this MMP $\psi:\overline{X}'\dashrightarrow \overline{X}''$ and let $\overline{\mcF}'':=\psi_*\overline{\mcF}'$, $\Gamma''=\psi_*\Gamma'$, $\overline{\Delta}''=\psi_*\overline{\Delta}'$, $E''=\psi_*E'$, $\overline{A}''=\psi_*\overline{A}'$ and $K''=\psi_*K'$. Note that this MMP is $\overline{H_R'}$-trivial by \cite[Lemma 3.1]{CS3}.\\

We have that $(\overline{X}'',\Gamma''+E'')$ is klt and $\frac{1}{s}K''-(K_{\overline{X}''}+\Gamma''+E'')$ is big and nef. Thus by classical basepoint free theorem we have $K''$ is semi ample.\\

By our choice of $\lambda'$ and $s$, the $\mbQ$-divisor $A_0$ is ample and the stable base locus of $(H_R-A_0)$ is same as $\mathbb{B}_+(H_R)$. We can write $K=\pi^*(H_R-A_0)+sF$ and it is easy to check that the restricted base locus of $K$ is exactly the union of Supp $F$ with the preimage of augmented baselocus of $H_R$, which we denote by $Z$. Thus by \cite[Lemma $2.1$]{CS3} we have that divisorial part of Supp $F\cup Z$ is contracted by $\psi\circ\phi$, and since $\Ex \pi'=$Supp $F$ it follows that the induced map $f:X\dashrightarrow \overline{X}''$ is an isomorphism in codimension one. In particular we have that 
$\Gamma''=f_*B$. Note that since $\Tilde{X}$ and $\overline{X}''$ are both $\mbQ$-factorial, it follows that $\rho(\tilde{X})=\rho(\overline{X}'')$. By \cite[Lemma 3.1]{CS3}, we have $\psi\circ\phi:\overline{X}\dashrightarrow\overline{X}''$ is $\pi^*H_R$-trivial.\\

Next, we show the existence of contraction $c_R:X\rightarrow Z$ associated to $R$. If $(K_X+B)\cdot R<0$ then we can contract $R$ by the classical basepoint free theorem. Suppose not, then we have $(K_X+B)\cdot R\geqslant 0$. If we have $(K_X+B)\cdot R=0$ then $H_R+\epsilon(K_X+B)$ is nef and big for $\epsilon>0$ small enough (with null locus $R$), hence semiample by the basepoint free theorem. Hence $R$ can be contracted in this case. Thus we may assume $(K_X+\Delta)\cdot R>0$, hence there exists $c>0$ such that $c(K_X+\Delta)\cdot \zeta=A_0\cdot \zeta$ for all curves $\zeta$ with $[\zeta]\in R$. By our choice of $c$ we have that if $m$ is a sufficiently large positive integer and we set $\tilde{H_R}=(c(K_X+\Delta)-A_0)+mH_R$, then $\tilde{H_R}$ is nef and $\tilde{H_R}^{\perp}\cap \overline{NE}(X)=R$ (here we have used the fact that $(H_R-A_0)|_{\overline{NE}(X) \setminus R}>0$). In particular, we have that $\pi^*H_R^{\perp}\cap \overline{NE}(\overline{X})=\pi^*\tilde{H}_R^{\perp}\cap\overline{NE}(\overline{X})$. We know that $\psi\circ\phi$ is $(\pi^*H_R)$-trivial, so it is also $(\pi^*\tilde{H}_R)$-tirvial. In particular $f_*\tilde{H_R}$ is nef. As $f_*(K_X+B)=K_{\overline{X}''}+\Gamma''$ we have that $f_*\tilde{H_R}-c(K_{\overline{X}''}+\Gamma '')=f_*(\tilde{H_R}-A_0)+(m-1)f_*\tilde{H_R}$ is big and nef, since $f_*(\tilde{H_R}-A_0)=(\psi\circ\phi)_*K$ is nef and $f_*\tilde{H_R}$ is big and nef. Since $(\overline{X}'', B'')$ is klt and $\mbQ$-factorial, we may therefore apply the basepoint free theorem (c.f. \cite[Theorem $3.9.1$]{BCHM}) to conclude that $f_*\tilde{H}_R$ is semi ample. It follows that $\tilde{H}_R$ is itself semi ample, so we can take $c_R:X \to Z$ as its semiample fibration.\\

Our next goal is to show descent of relatively numerically trivial line bundles and the preservation of $\mbQ$-factoriality under divisorial and Fano contractions. Let $c_R: X \to  Z$ be the contraction obtained above. If $c_R$ has positive dimensional general fiber, then as observed above, $(K_X+B) \cdot R <0$, so everything follows from the classical case; see for example \cite[Lemma 3.18]{KM98}. So we assume $c_R$ is birational. First, we show descent. Let $L \in \Pic X$ such that $L \equiv _{Z}0$. The descent of $L$ and $\mbQ$-factoriality of $Z$ follows from the classical case when $(K_X+B) \cdot R <0$. If $(K_X+B)\cdot R =0$, then $L-(K_X+B)$  is nef and big over $Z$ (since $c_R$ is birational in this case), therefore $L$ descends to $Z$ by the basepoint free theorem. From now on we assume that $(K_X+B) \cdot R>0$. In the notation of the above paragraph, note then that $\psi \circ \phi$ is $L$-trivial (since it is $\overline{H_R}$-trivial and $H_R^\perp \subset L^\perp$). Thus it is enough to show that $L'':= f_*L$ descends to $Z$. First, we note that $\phi$ is a full $(K_{\overline{\mcF}}+\overline{\D}+\lambda \overline{A})$-MMP. Thus $\phi$ contracts $B_-(K_{\overline{\mcF}}+\overline{A}+\lambda \overline{A})$. In particular if loc $R$ is a divisor, then $\phi$ contracts its strict transform. As observed above, $\psi$ contracts $\Ex \pi$, thus $B_+(\overline{H_R}'')$ is an union of curves and the induced morphism $\pi'': \overline{X}'' \to Z$ contracts them. In particular, $\pi''$ is small. By construction of $c_R$,  there exists $\Theta' \geq 0$ such that $(Z, \Theta')$ is klt. Let $\Theta''$ be defined by $K_{\overline{X}''}+\Theta''= \pi''^*(K_{Z}+\Theta')$.\\

We claim that there exists $G'' \geq 0$ such that $-G''$ is $\pi''$-ample. Indeed, fix ample divisors $A''$ on $\overline{X}''$ and $A'$ on $Z$. Let $U \subset \overline{X}''$ big open such that $\pi''|_U$ is an isomorphism. We can choose $s>0$ such that $(s(\pi''^*A')-A'')|_U$ is ample. Choose $G''_U    \in |(s(\pi''^*A')-A'')|_U|_\mbQ$ and let $G''$ be its Zariski closure. Since $G'' \sim_\mbQ s(\pi''^*A')-A''$, $-G''$ is $\pi''$-ample. This proves the claim.\\

If $\delta>0$ is sufficiently small, then $(\overline{X}'', \Theta''+\delta G'')$ is klt. Now $L''-(K_{\overline{X}''}+\Theta''+\delta G'')$ is ample over $Z$. Thus, thanks to  the classical basepoint free theorem \cite[Theorem 3.9.1]{BCHM}, $L''$ is semiample over $Z$. Hence, there exists $M \in \Pic Z$ such that $L''= \pi''^*M$. In particular, $L$ descends to $Z$.\\

We now show that if $X$ is $\mbQ$-factorial and $c_R$ a divisorial contraction, then $Z$ is $\mbQ$-factorial. The arguments are standard. Let $E\subset X$ be the $c_R$-exceptional divisor and let $D'$ be a Weil divisor on $X'$ whose strict transform on $X$ we denote by $D$. Since $\rho(X/X')=1$, there exists $\alpha \in \mbQ$ such that $D-\alpha E \equiv_{X'}0$. Thus, by the above observation, there exists a $\mbQ$-Cartier $\mbQ$-divisor $M$ on $X'$ which pulls back to $D-\alpha E$. Pushing forward to $Z$  gives $M=D'$, thus $D'$ is $\mbQ$-Cartier.\\

Now to prove the theorem for projective morphism $\pi:X\rightarrow U$ of projective varieties, if we follow the proof of the absolute case, we just need existence and termination of relevant relative MMPs and the classical base-point free theorem in this set-up. The existence and termination of classical MMP and classical base-point free theorem for a projective morphism from a normal projective three-fold is well known. Thanks to \cite[Theorem $2.1$]{SS22} we have existence and termination of the foliated MMP for dlt foliated triples, as a corollary (c.f. \cite[Theorem $2.4$]{SS22}) we also have F-dlt modification in this relative set-up. Since $\pi:X\rightarrow U$ is a projective morphism, similarly as in the case of $U$ being a point, we can use the arguments concerning the sign of the intersection number $(K_X+B)\cdot R$ for a negative extremal ray $R\in \overline{NE}(X/U)$. Hence the proof of the contraction theorem in the absolute case readily works for the relative case.

\end{proof}



\section{Existence of flips}
In this section, we prove the existence of flips for a rank two lc foliated triple $(X,\mathcal{F},\Delta)$ where $\dim X=3$. In fact we have the following more general result on the existence of relative log canonical models. At the end of this section, we present an example of a foliated log canonical flip. The results of this section answer \cite[Question 8.4]{SS22} in our setting. 
\begin{theorem}\label{lcflip}
    Let $(X, \mcF, \Delta)/U$ be a projective foliated lc triple of rank two such that $(X, B)$ is klt for some $B \leq \Delta$, and $\pi:X\rightarrow U$ be a projective morphism of projective varieties. Let $f: X \to Z $ be the contraction of a $(K_\mcF+\Delta )$-negative extremal ray over $U$. Then the log canonical model $(X^+, \mcF^+, \Delta^+)$ of $(K_\mcF+\Delta)$ over $Z$ exists. Moreover, letting $B^+$ denote the strict transform of $B$ on $X^+$, $(X^+, B^+)$ is klt and $(X^+,\mcF^+, \Delta^+)$ is lc.
\end{theorem}
  \begin{proof}

  Let $g:\overline{X} \to X$ be a F-dlt modification of $(X,\mcF, \Delta)$ (which exists due to \cite[Theorem $2.4$]{SS22}), $\Gamma:= g_*^{-1}B$ and write \begin{center}
     $ K_{\overline{X}}+ \Gamma +E_0 = g^*(K_X+B) +F_0$,
  \end{center}
  where $E_0, F_0 \geq 0$ are $g$-exceptional divisors without common components. We may assume $g$ factors through a small $\mbQ$-factorialization $ \overline{X} \xrightarrow{g'} X'\to X$. Let $G \geq 0$ be an exceptional divisor on $\overline{X}$ which is anti-ample over $X'$. Using \cite[Lemma 3.16]{CS}, there exists $ \delta>0$ such that $(\overline{X}, \Gamma+E_0+\delta G)$  is klt. Let $E:=E_0+\delta G$, $F :=F_0+\delta G$, so we have $K_{\overline{X}}+\Gamma+E = g^*(K_X+B)+F$. Let $ \overline{\Delta}:= \pi_*^{-1}\Delta+ \sum_i \epsilon(E_i)E_i$, where the last sum is over all $g$-exceptional divisors. Let $\phi: \overline{X} \dashrightarrow \overline{X}'$ be a $(K_{\overline{\mcF}}+\overline{\Delta})$-MMP over $Z$. We can choose $s>0$ such that if $K:= K_{\overline{\mcF}}+\overline{\Delta}+s(K_{\overline{X}}+\Gamma+E)$, then $\phi$ is a $K$-MMP. Letting $K':= \phi_*K$, let $\psi: \overline{X}'\dashrightarrow \overline{X}''$ be a $K'$-MMP over $Z$ and note that we can replace $s$ by something possibly smaller such that $\psi$ is $(K_{\overline{\mcF}'}+\overline{\Delta}')$-trivial. Then $K'':= \psi_*K'$ is semiample over $Z$ by the classical basepoint free theorem. We note that $\psi \circ \phi$ contracts the relative diminished base locus $B_{-}(K/Z)$ by \cite[Lemma 2.1]{CS3}. We now proceed to show that $\psi \circ \phi$ contracts all $g$-exceptional divisors. Let $H$ be an ample over $Z$ divisor on $X'$. We can pick $ \epsilon >0$ such that $g'^*H - \epsilon G$ is ample over $Z$, $B_-(K/Z)=B((K+ g'^*H- \epsilon G)/Z)$ and such that $ \epsilon < s \delta$. Note that
  \begin{center}
      $K+ g'^*H - \epsilon G = g'^*[(K_{\mcF'} +\Delta' +H)+ s(K_{X'}+B')]+sF- \epsilon G$,
  \end{center}
   where $B', \Delta'$ denote the strict transforms of $B, \Delta$  on $X'$, $ sF-\epsilon G \geq 0$ by our choice of $\epsilon$ and that the exceptional divisor $\Ex g'\subset Supp (sF-\epsilon G)$. Thus $|K+g'^*H -\epsilon G|_\mbR=g^*|(K_{\mcF'} +\Delta' +H)+ s(K_{X'}+B')|_\mbR +sF- \epsilon G$ and in particular, $ \Ex g' \subset B_-(K/Z)$. We have thus shown that $ \psi \circ \phi$ contracts $\Ex g'$.\\


   Now we construct the relative $(K_{\mcF}+ \Delta)$-log canonical model $X^+$. Suppose $f$ is the contraction of a $(K_\mcF+\Delta)$-negative extremal ray $R$. For this, we may assume $(K_X+B) \cdot R \geq 0$ (if $(K_X+B) \cdot R <0$, the existence of the desired model is given by \cite[Theorem 1.1]{Bir12}). Then there exists some $\alpha \geq 0, \beta >0$ and an ample over $Z$ divisor $A$ such that $K_X+B \sim _{\mbR, Z} \alpha A$ and $-(K_\mcF+\Delta) \sim _{\mbR,Z} \beta A$. Then letting $\overline{A}:= g^*A$ we have $K\sim_{\mbR, Z}(s \alpha- \beta) \overline{A}+ sF$ and we may choose $s$ such that $s \alpha < \beta$. Letting $\overline{X}''$ denote the output of the $K$-MMP over $Z$ as above, note that $F''=0$ (since the MMP contracts all exceptional divisors of $\overline{X} \to X'$). Thus $K''\sim_{\mbR,Z} (s \alpha - \beta)\overline{A}''$ is semiample over $Z$, forcing $K_{\overline{\mcF}''}+\overline{\Delta}'' \sim_{\mbR, Z}-\beta \overline{A}''$ to be semiample over $Z$; in particular, the desired log canonical model is given by its relative semiample fibration $\overline{X}'' \to X^+$. Note that $(X^+, B^+)$ is still klt as the semiample fibration is $(K_{\overline{X}''}+\overline{B}'')$-trivial.

  \end{proof} 
  \begin{remark}
      In the above proof, note that $K_{\mcF'}+\overline{\Delta}'$, being crepant to $K_{\mcF''}+\overline{\Delta}''$, is semiample over $Z$. We will need this observation when we show termination of flips for foliated lc triples.
  \end{remark}
  \begin{remark}
      Note that our proof of the existence of log canonical flips works verbatim in arbitrary dimensions once we have the existence of F-dlt modification and termination of the relative F-dlt MMP over the base of the flipping contraction. 
  \end{remark}
  
  Next we construct an example of a strictly log canonical foliated flipping contraction on a threefold. Our example shows that unlike the case of rank one foliations on threefolds (see \cite[Corollary 8.4]{CS20}), for rank two, the flipping curve can pass through the locus of  dicritical singularities of the foliation. 
  \begin{example}\label{exflip}
   Let $N$ be a lattice of rank $3$ and $\sigma$ the cone generated by vectors $v_1,v_2,v_3,v_4$ generating $N$ as a lattice such that $v_1+v_3=v_2+v_4$. Consider $\Delta_1$ to be the subdivison of $\sigma$ which we get by joining $v_1$ and $v_3$, and $\Delta_2$ to be the subdivison of $\sigma$ we get by joining $v_2$ and $v_4$. Let $X$ be the toric variety corresponding to $\sigma$ and $X(\Delta_i)$ be the toric varieties corresponding to $\Delta_i$ for $i=1,2$. The morphism $X(\Delta_1)\rightarrow X$ induced by the subdivision is the Atiyah flopping contraction with the flopping curve corresponding to the two dimensional face generated by $v_1$ and $v_3$. Note that $X(\Delta_2)\rightarrow X$ is the corresponding flop and the flopped curve in $X(\Delta_2)$ corresponds to the two dimensional face generated by $v_2$ and $v_4$.\\
   
   We will try to realise this flop as a foliated flip. Firstly giving a toric foliation on a toric variety is equivalent to giving a vector subspace $W\subset N\otimes \mbC$. Let $W$ be the complex vectorspace generated by $v_2$ and $v_4$. Consider the corresponding corank one foliation $\mathcal{F}_{W}$ on $X(\Delta_1)$. Let us denote the face generated by $v_1$ and $v_3$ by $\tau$, and the corresponding curve to be $C_{\tau}$. By \cite[Proposition$1.9$]{CC23} we know that $K_{\mathcal{F}_W}=-D_{v_2}-D_{v_4}$, $D_{v_i}$ are divisors on $X(\Delta_1)$ corresponding to the rays generated by $v_i$ for $i=2,4$. Now by \cite[Lemma 6.4.4]{CLS11} we can see that $-(D_{v_2}+D_{v_4})\cdot C_{\tau}<0$. Note that this curve generates an extremal ray in $\overline{NE}(X(\Delta_1))$. Hence $\mbR^+[C_{\tau}]$ is a $K_{\mathcal{F}_{W}}$-negative extremal ray of flipping type. By Cone theorem, $\mathcal{C_{\tau}}$ is $\mathcal{F}_W$ invariant and hence $X(\Delta_1)\rightarrow X$ is a $K_{\mathcal{F}_W}$ flipping contraction. By \cite[Proposition $3.8$]{CC23} $\mathcal{F}_W$ has at worst foliated log canonical singularities as $X(\Delta_1)$ is $\mbQ$-factorial. Now observe that $\tau$ is not contained in $W$ but $W\cap relint(\tau) \cap N$ is non-empty. By \cite[Theorem 1.19]{CC23} $\mathcal{F}_{W}$ is dicritical hence it can not have F-dlt or canonical singularities (see \cite[Theorem $11.3$]{CS}). So $\mathcal{F}_W$ is a strictly log canonical foliation on $X(\Delta_1)$ and $X(\Delta_1)\rightarrow X$ is our desired flipping contraction. Again using \cite[Proposition 6.3.4]{CLS11}, it is easy to see that $X(\Delta_2)\rightarrow X$ is the foliated flip of $X(\Delta_1)\rightarrow X$. Note that the strict transform of $\mathcal{F}_W$ on $X(\Delta_2)$ is non-dicritical by \cite[Theorem 1.19]{CC23}. In particular, the foliation $\mcF_W$ has dicritical singularities only along the flipping curve $C_\tau$. Now, we show that $C_{\tau}$ is a strictly lc center for $\mcF_1$. Let $\D_3$ be the subdivision of $\D_1$ we get by joining $v_2$ and $v_4$. Then we have a morphism $p:X(\D_3)\rightarrow X(\D_1)$ with the exceptional divisor $E$ surjecting onto $C_{\tau}$. By \cite[Proposition $5.10$]{CC23} $E$ is not invariant by $\mcF_3$. Hence $C_{\tau}$ is a strictly lc center for $\mcF_1$. Consider the divisor $\Sigma_1=D_{v_2}+D_{v_4}$ in $X(\D_1)$. Note that $(X(\D_1),\mcF_1,\Sigma_1)$ is a lc foliated triple, and $K_{\mcF_1}+\Sigma_1\sim 0$. Write $p^*(K_{\mcF_1}+\Sigma_1)=K_{\mcF_3}+\Sigma_3+bE$, where $\Sigma_3$ is the strict transform of $\Sigma_1$, which does not contain the support of $E$. We have that $K_{\mcF_3}+\Sigma_3+bE\sim 0$, which implies $\Sigma_3+bE$ is reduced sum of $\mcF_3$ non-invariant torus boundary. From the above information we get that $b=1$, in particular $a(E,\mcF_1,\Sigma_1)=-1$, which implies $(X(\D_3),\mcF_3,\Sigma_3+E)$ is the F-dlt modification of $(X(\D_1),\mcF_1,\Sigma_1)$. As the transformed foliation $\mcF_2$ on $X(\D_2)$ has non-dicritical singularities, the image of $E$ in $X(\D_2)$, which is the flipped curve $C_{\tau'}$, is not tangent to the foliation $\mcF_3$.
  \end{example} 

  \section{Running the MMP}
  Let $(X, \mcF, \Delta)/U$ be a corank one lc foliated triple where $\dim X=3$ and there exists $\Delta \geq B \geq 0$ such that $(X,B)$ is klt. Let $R$ be a $(K_\mcF+\Delta)$-negative extremal ray over $U$. Then by Theorem \ref{CT}, the associated contraction $c_R: X \to Z$ exists. However, in case $X$ is not $\mbQ$-factorial, it is not clear that the strict transforms of $K_\mcF +\Delta$ and $K_X+B$ remain $\mbR$-Cartier on $Z$. This issue can be addressed as follows:\\

  Let $(X', \mcF', \Delta')$ be the log canonical model of $(X, \mcF, \Delta)$ over $Z$; its existence is guaranteed by Theorem \ref{lcflip}. Replace $(X, \mcF, \Delta)$ with $(X', \mcF', \Delta')$ and note that by Theorem \ref{lcflip}, $(X',B')$ is klt. Then by Theorem \ref{CT} and \ref{lcflip}, we can continue running the $(K_{\mcF'}+\Delta')$-MMP. It follows from the proof of Theorem \ref{lcflip} that if $X$ is $\mbQ$-factorial, then so is $X'$.

\section{Termination}
In this section, we prove that any MMP for foliated lc triples eventually terminates.
\begin{theorem}\label{lcterm}
     Starting from a rank two foliated projective lc triple $(X,\mathcal{F},\Delta)/U$ where $\dim X =3$ and $(X,B)$ klt for some $0\leqslant B\leqslant \Delta$, there is no infinite sequence of $(K_\mcF+\D)$-MMP over $U$.
\end{theorem}
\begin{proof}
    Let \begin{equation*}
\xymatrix{    
               (X_0, \mathcal{F}_0,\Delta_0)\ar[dr]_{f_0} \ar@{-->}[rr] &     & (X_1,\mathcal{F}_1,\Delta_1)\ar[dl]^{f_0^+}\ar[dr]_{f_1} \ar@{-->}[rr]& & (X_2,\mathcal{F}_2,\Delta_2)\ar[dl]^{f_1^+} \cdots&\\
                                             &Z_0&                  & Z_1&
                                             }
 \end{equation*} 
 be a sequence of foliated log canonical MMP starting from $(X_0,\mcF_0,\Delta_0):=(X, \mcF, \Delta)$. Now as in the proof of Theorem \ref{lcflip}, we go to a F-dlt modification $(\overline{X_0}, \overline{\mcF_0},\overline{\Delta_0})$ of $(X_0,\mcF_0,\Delta_0)$ and run a $(K_{\overline{\mcF_0}}+\overline{\Delta_0})$-MMP $\phi_0:\overline{X_0} \dashrightarrow \overline{X_0}'$ followed by a $K:=K_{\overline{\mathcal{F}_0}'}+\overline{\Delta_0}'+s(K_{\overline{X}'}+\overline{\Gamma}'+E)$-MMP $\psi_0: \overline{X_0}' \dashrightarrow \overline{X_0}''$ over $Z_0$. For small enough $s$, the whole map $\overline{X}_0\dashrightarrow \overline{X}_0''$ is a $K$-MMP over $Z_0$. Note that $\psi_0$ is a $(K_{\overline{\mathcal{F}_0}'}+\overline{\Delta_0}')$-trivial, partial $(K_{\overline{X}'}+\Gamma_0'+E_0')$-MMP over $Z_0$. Hence if $p:W\rightarrow \overline{X_0}'$ and $q:W\rightarrow \overline{X_0}''$ resolve the locus of indeterminacy of $\psi_0$ we get that $p^*(K_{\overline{\mathcal{F}_0}'}+\overline{\Delta_0}')=q^*(K_{\overline{\mathcal{F}_0}''}+\overline{\Delta_0}'')$. As $K_{\overline{\mathcal{F}_0}''}+\overline{\Delta_0}''$ is semi-ample over $Z_0$, so is $K_{\overline{\mathcal{F}_0}'}+\overline{\Delta_0}'$. $(X_1, \mathcal{F}_1,\Delta_1)$ being the ample model, we have a morphism $\pi_1:\overline{X}_0'\rightarrow X_1$, given by the semiample fibration of $K_{\overline{\mathcal{F}_0}'}+\overline{\Delta_0}'$ over $Z_0$.  Since $(\overline{X_0}', \overline{\mcF_0}', \overline{\Delta_0}')$ is dlt foliated triple, we see that $\pi_1$ is a F-dlt modification of $(X_1, \mcF_1, \Delta_1)$. Now we can again repeat the process starting from $(X_1, \mcF_1, \Delta_1)$ over $Z_1$. Hence we get a sequence of foliated flips and divisorial contractions over $U$, in particular over Spec $\mbC$ starting from a $\mbQ$-factorial dlt triple $(\overline{X}_0,\overline{\mathcal{F}_0},\overline{\Delta_0})$ which must terminate by \cite[Theorem 2.1]{SS22}. Hence the starting sequence of log canonical flips cannot be infinite.
\end{proof}
\section{Minimal model program for generalized foliated quaduples}
In this section, we develop the MMP for NQC generalized foliated quadruples. Owing to failure of Bertini's theorem for foliated lc triples, this general setting seems necessary for establishing foliated versions of classical MMP results: flop connection between minimal models (Theorem \ref{flop}) and the basepoint free theorem \ref{bpft}. First, we record a Bertini-type result which allows us to reduce the contraction theorem and existence of flips for dlt gfqs to the case of lc triples. This result could be of independent interest. Note that all foliated quadruples we consider in this paper will be assumed to be NQC.

\begin{lemma}\label{dlt-lc} Let $(X,\mcF, \Delta, \M)/U$ be a $\mbQ$-factorial dlt rank two gfq, where $X$ is a $\mbQ$-factorial projective threefold and $\pi:X\rightarrow U$ is a projective morphism from $X$. Further, assume there exists $0 \leq B \leq \Delta$ such that $(X,B, \M)$ is klt. Let $A$ be a $\pi$-ample $\mbR$-divisor on $X$. Then there exists $\Theta \geq 0$ such that $(X,\mcF,\Theta)$ is lc and $K_\mcF+\Theta\sim_{\mbR,U} K_\mcF +\Delta+\M_X+A$.

\end{lemma}
\begin{proof}
 First, observe that replacing $\D+A$ with $(1-\epsilon)\D+(A+\epsilon\D)$ where $\epsilon>0$ small, we may reduce to the case $\lfloor \D \rfloor=0$. Let $g: X' \to X$ be a foliated log resolution on which $\M$ descends such that $g$ only extracts foliated klt places of $(X,\mcF, \Delta, \M)$. Write $K_{\mcF'}+\Delta'+\M_{X'} = g^*(K_\mcF +\Delta+\M_X)$ and note that coeff$_E(\Delta')< \epsilon(E)$ for any prime divisor $E$ which is a component of $\Delta'$. We can choose $0 <\delta$ such that $g^*A-\delta E$ is $\pi$-ample for some $E \geq 0$ which is $g$-exceptional and for all $\delta$ sufficiently small.  Letting $A':=g^*A$, we can write $K_{\mcF'}+\Delta'+\M_{X'}+A' = K_{\mcF'}+(\Delta'+\delta E)+\M_{X'}+(A'-\delta E)$ and note that coeff$_{E_i}(\Delta'+\delta E)< \epsilon(E_i)$ for any exceptional divisor $E_i$ possibly decreasing $\delta$ further. Consequently, $(X', \mcF', \Delta'+\delta E)$ is sub dlt with $\lrd{\Delta'+\delta E}\rrd \leq 0$. Note that $\M_{X'}+A'-\delta E$ is ample over $U$, and as $U$ is a projective variety, we can find a sufficiently ample divisor on $U$ and add its pullback to make $\M_{X'}+A'-\delta E$ globally ample on $X'$. Then by \cite[Proposition 3.9]{CS}, for a general member $H' \in |\M_{X'}+A'-\delta E|_{\mbR}$, $H'$ does not contain any lc centers of $(X', \mcF', \Delta'+\delta E, \M_{X'}+A'-\delta E)$. Then letting $\Theta':=\Delta'+\delta E +H'$, $\Gamma' := B'+\delta E+H'$, $(X', \mcF', \Theta')$ is sub-lc. Letting $\Theta:=g_*\Theta'$ finishes the proof.

\end{proof}


\subsection{Minimal model program for dlt gfqs}
Let $(X,\mcF, B, \M)/U$ be a dlt gfq, where $X$ is a $\mbQ$-factorial normal projective threefold and $(X, \overline{B},\M)$ is gklt for some divisor $0 \leqslant\overline{B}\leqslant B$. Let $R \subset \overline{NE}(X/U)$ be a $(K_{\mcF}+B+\M_X)$-negative extremal ray. We can then find an ample $\mbR$-divisor $A$ such that $(K_\mcF+B+\M_X+A) \cdot R <0$. By Lemma \ref{dlt-lc}, $K_\mcF+B+\M_X+A\sim_{\mbR,U} K_\mcF+\Delta$ for some $\Delta\geq 0$ such that $(X,\mcF, \Delta)$ is lc and $X$ is klt. Thus by the contraction theorem and the existence of flips for lc foliated triples, we have a contraction morphism $\phi$ for $R$ and if $\phi$ is of flipping type, the corresponding flip exists. Thus we can always run a $(K_\mcF +B+\M_X)$-MMP over $U$ in the above setting. We proceed to show the termination of this MMP. The following adjunction type lemma turns out to be the main technical ingredient. It will be used for setting up an inductive approach to (special) termination. In what follows, if $D$ is an effective $\mbR$-divisor on a variety $X$, we use the notation $(D)_{\leq 1}$ to denote $\lfloor D \rfloor_{\rm{red}}+\{D\}$.

\begin{proposition}\label{adj}
  Let $(X, \mcF, \Delta, \M)/U$ be a lc rank two gfq where $X$ is a normal projective threefold of klt type and $\pi: X \to U$ is a projective morphism.
  \begin{enumerate}
      \item $T$ be a prime divisor on $X$ with mult$_T\Delta=\epsilon(T)=1$ and normalization $\nu: S \to X$. Then there exists $\Theta_S \geq 0$ on $S$ such that letting $\mcF_S$ denote the restricted foliation on $S$, there exists a lc gfq $(\mcF_S, \Theta_S, \N)$ with $\nu^*(K_\mcF+\Delta+\M_X)= K_{\mcF_S}+\Theta_S+\N_S$.
      
      \item Assume that $(X,\mathcal{F},\D,\M)$ is a $\mbQ$-factorial dlt gfq. Let $D\subset X$ be a prime divisor invariant by $\mcF$, with normalisation $\nu:D^{\nu} \to D$. Then there exists $\Theta_{D^{\nu}} \geq 0$ such that $(K_{\mcF}+\D+\M_X)|_{D^{\nu}}=K_{D^{\nu}}+\Theta_{D^{\nu}}+\N_{D^\nu}$ and $(D^\nu, (\Theta_{D^\nu})_{\leq 1}, \N)$ is generalized lc. 
      
      Moreover, the non generalized log canonical locus $\rm{Supp} (\Theta_{D^\nu})_{>1}$ is contained in the pre-image of intersection of $Sing\mcF$ and $D$ in $D^{\nu}$ and the support of $\nu((\Theta_{D^\nu})_{>1})$ consists of lc centers of $(X, \mcF, \D,\M)$.
      
      \item Assume that $(X,\mathcal{F},\D,\M)$ is a $\mbQ$-factorial dlt gfq. Let $C$ be a one dimensional generalized lc center tangent to $\mathcal{F}$ with normalisation $C^{\nu}$. Then there exists an effective divisor $\Theta_{C^{\nu}}$ such that $(K_{\mcF}+\D+\M_X)|_{C^{\nu}}=K_{C^{\nu}}+\Theta_{C^{\nu}}+\N$. Moreover if $P$ is contained in the support of $\lfloor\Theta_{C^{\nu}}\rfloor$ then $\nu(P)$ is a generalized lc center $(X,\mathcal{F},\D,\M)$, and the coefficients of $\{\Theta_{C^\nu}\}$ belong to a DCC set independent of $X$ and $\mcF$.
  \end{enumerate}
  \end{proposition}
\begin{proof}
        First, replacing $X$ by a small $\mbQ$-factorialization, we may assume $X$ to be $\mbQ$-factorial; in particular, $\M_X$ is $\mbR$-Cartier. Let $g:X'\to X$ be a foliated log resolution of $(X,\mcF, \Delta,\M)$ such that $\M$ descends to a nef over $U$ divisor on $X'$ and $S' := g_*^{-1}S$ is smooth. Let $\N_{S'}:=\M_{X'}|_{S'}$ and $\N_S:=g_*\N_{S'}$. Let $\D'$ be defined by $K_{\mcF'}+\Delta'+\M_{X'}= g^*(K_\mcF+\Delta+\M_X)$ and $\Theta_{S'}$ be defined by $(K_{\mcF'}+\Delta')|_{S'}=K_{\mcF'_{S'}}+\Theta_{S'}$, where $\mcF'_{S'}$ denotes the restricted foliation on $S'$. Since $K_{\mcF'_{S'}}+\Theta_{S'}+\N_{S'}=g^*(K_{\mcF_S}+\Theta_S+\N_S)$, it is enough to show that $(S',\mcF'_{S'}, \Theta_{S'})$ is sub-lc. For this, first note that $(K_{\mcF'}+S')|_{S'}=K_{\mcF'_{S'}}+B_{S'}$, where $B_{S'}\geq 0$ is contained in the locus where $S'$ is tangent to $\mcF'$ \cite[Corollary 3.3]{Spi}. By the definition of foliated log smooth, it follows that $B_{S'}=0$. Then we get $K_{\mcF'_{S'}}+(\D'-S')|_{S'}=g^*(K_{\mcF_S}+\Theta_{S})$. By log smoothness, if $K_{\mcF'_{S'}}+(\D'-S')|_{S'}$ is not sub-lc, then there exists a component $E$ of $\D'-S'$ with $\rm{coeff}_E(\D'-S')=$$a_E$ such that $\epsilon_\mcF(E)=1$, but $\epsilon_{\mcF'_{S'}}(E|_{S'})=0$ and $a_E>0$. We show this can't happen. Let $\mu: Y \to X'$ be the blow up of $X'$ along $E \cap S'$ with exceptional divisor $F$. Then $\epsilon_\mcF(F)=0$. We claim that the foliated discrepancy $a(F,\mcF', \D') \leq -a_E$. By log smoothness, this is true for the usual discrepancy of $F$ with respect to $(X', \D')$. The claim then follows from \cite[Lemma 3.1]{Spi}. 
        Set $\Theta_S= \pi_*\Theta_{S'}$. Then it is easy to check that $\Theta_S \geq 0$. Indeed, it follows by another application of the negativity lemma that the generalized foliated different is at least the usual foliated different (see \cite[Remark 4.8]{BZ} for the arguments) and the latter is effective by \cite[Propostion 3.4]{Spi}. Thus $(S,\mcF_S, \Theta_S, \N)$ is lc.\\
        
        Now, to prove part $2$, we consider a foliated log resolution $h:(Y, \mcF')\rightarrow (X, \mcF)$ of $(X,\mathcal{F},\D+D,\M)$ such that $\M$ descends to $Y$. Letting $D':= h_*^{-1}D$, since $h|_{D'}$ factors through $D^\nu$, we replace $D$ by $D^{\nu}$. Let $\M_Y|_{D'}=:\N_{D'}$ and $\N_{D^{\nu}}:=h_*\N_{D'}$. We define $\D'$ by $K_{\mcF'}+\Delta'+\M_{X'}= h^*(K_\mcF+\Delta+\M_X)$, $\Theta_{D'}$ by $(K_{\mcF'}+\D')|_{D'}=K_{D'}+\Theta_{D'}$ and $\Theta_{D^{\nu}}=h_*\Theta_{D'}$. Then $h^*(K_{D^{\nu}}+\Theta_{D^{\nu}}+\N_{D^{\nu}})=K_{D'}+\Theta_{D'}+\N_{D'}$. Note that $(D', \D'|_{D'})$ is a log smooth lc sub pair. Let $B_{D'}$  be defined by $K_{\mcF'}|_{D'}=K_{D'}+B_{D'}$, then by the proof of \cite[Lemma 8.9]{Spi}, it follows that $B_{D'}$ is supported on the codimension two components of $\rm{Sing} \mcF'$ contained in $D'$. Since $\Theta_{D'}=B_{D'}+\D'|_{D'}$, it follows that the non-lc locus of the sub pair $(D',\Theta_{D'})$ is supported on codimension two components of the singular locus of $\mcF'$ contained in $D'$. Notice that $\mathcal{F}'$ has simple singularities, which forces the non-lc centers of $(D',\Theta_{D'})$ to be log canonical centers of the foliation $\mathcal{F}'$ (see for example, the proof of \cite[Lemma 3.3]{CS}). Let $C'$ be such a component of $\rm{Sing}\mcF'$. We claim that $\pi(C')$ is a one-dimensional component of $\rm{Sing}\mcF$. Suppose not, then the foliation $\mcF$ is smooth along $\pi(C')$, in particular at an analytic neighbourhood of $\pi(C')$ the foliation admits a holomorphic first integral. That implies $\mcF'$ admits a holomorphic first integral at an analytic neighbourhood of $C'$. As $C'$ is in the non-lc locus of $(D',\Theta_{D'})$, the foliation has simple singularity of second kind along $C'$, i.e. a saddle node singularity \cite[Definiton 2.8]{CS}. Hence, $\mcF'$ does not admit any holomorphic first integral along $C'$, which is a contradiction, and we have proved our claim. Now observe that $\mcF$ can't have terminal singularities along $\pi(C')$; otherwise by \cite[Corollary 5.15]{SS22}, it would have a holomorphic first integral, which would lead to a contradiction as above. 
       As the image of the non lc locus of $(D',\Theta_{D'})$ is the non glc locus of $(D^{\nu},\Theta_{D^{\nu}},\N_{D^{\nu}})$, it is supported on the pre image of some one dimensional components of $\rm{Sing}\mcF$ contained in $D$. Now we observe that $\pi(C')$ is tangent to $\mcF$. Indeed, this is clear if the generic point of $C'$ is not contained in $\rm{Ex} (\pi)$. Otherwise, $C'$ is contained in some $\pi$-exceptional divisor(s) which is forced to be invariant by \cite[Remark 3.2]{CS}. But then by \cite[Remark 2.16]{CS}, all exceptional divisors with center $\pi(C')$ are $\mcF'$-invariant, thus proving the tangency of $\pi(C')$. Since $\mcF$ can't have terminal singularities along $\pi(C')$, it follows that $\pi(C')$ is a lcc of $\mcF$, in particular, a lcc of the gfq $(X,\mcF,\D,\M)$.     
       This proves that the non glc locus of $(D^{\nu},\Theta_{D^{\nu}},\N_{D^{\nu}})$ is supported on the pre-image of one-dimensional lc centers of the dlt gfq $(X,\mcF,\D,\M)$ in $D^{\nu}$.\\
       
       Let $\hat{X}$ denote the formal completion of $X$ along $D$ and $T= \sum_{i=1}^k T_i$ denote the (possibly formal) divisors on $X$ intersecting $D$. Note that $T$ is analytically $\mbQ$-Cartier as in the proof of \cite[Lemma 3.18]{CS}.
        Consider a log resolution $g:(\hat{Y},\Delta_{\hat{Y}}+T_{\hat{Y}}+D_{\hat{Y}})\rightarrow(\hat{X},\Delta+D+T)$ of $(\hat{X},\D+D+T)$, such that induced foliation $\mcF_{\hat{Y}}$ on $\hat{Y}$ has simple singularities and the b-divisor $\M$ descends on $Y$, where $\D_{\hat{Y}}, T_{\hat{Y}}, D_{\hat{Y}}$ denote the strict transforms of $\D, T, D$ respectively. We can write $K_{\hat{Y}}+\D_{\hat{Y}}+D_{\hat{Y}}+T_{\hat{Y}}+ \sum E_{i0}+\sum E_{j1}+\M_{\hat{Y}}=g^*(K_{\hat{X}}+\D+T+D+\M_{\hat{X}})+\sum b_{i0}E_{i0}+\sum b_{j1}E_{j1}$, $E_{i0}$ and $E_{j1}$'s are $\mathcal{F}_Y$-invariant and non-invariant divisors respectively. Similarly we can write the same equation in terms of $K_{\hat{\mcF}}$ where $\hat{\mcF}$ is the restriction of $\mcF$ to $\hat{X}$: $K_{\mcF_{\hat{Y}}}+\D_{\hat{Y}}+\M_{\hat{Y}}+\sum E_{1j}=g^*(K_{\hat{\mcF}}+\D+\M_{\hat{X}})+\sum a_{1j}E_{1j} +\sum a_{0i}E_{0i}$, where $a_{1j}, a_{0i} \geq 0$ for all $i,j$. Note that $(X,\mcF,\D)$ has dlt singularities by the negativity lemma; in particular $\mcF$ is non dicritrical by \cite[Theorem 11.3]{CS}. Arguing as in the proof \cite[Lemma $8.14$]{Spi}, it follows that $b_{1j}\geq a_{1j}$ and $b_{0i} \geq a_{0i}$. Restricting the last equation to $D'$ we get that $(K_{\mcF_{\hat{Y}}}+\D_{\hat{Y}}+\sum(1-a_{j1})E_{j1}+\M_{\hat{Y}})|_{D'}=K_{D'}+\Theta_{D'}+\N_{D'}$. Now, consider $(K_{\hat{Y}}+\D_{\hat{Y}}+\sum(1-a_{1j})E_{1j}+D_{\hat{Y}}+T_{\hat{Y}}+\sum E_{i0}+\M_{\hat{Y}})|_{D'}=K_{D'}+(\Theta_{D'})_{\leqslant 1}+\N_{D'}$; note that the different in this case is $(\Theta_{D'})_{\leq 1}$ by \cite[Lemma $8.9$]{Spi}. Our target is to find some boundary $\Theta'_{D^{\nu}}$ such that $(D^{\nu},\Theta'_{D^{\nu}},\N_{D^{\nu}})$ is generalized lc. Let $\Theta'_{D'}$ be defined by $(K_{\hat{Y}}+\D_{\hat{Y}}+\sum(1-b_{1j})E_{1j}+D_{\hat{Y}}+T_{\hat{Y}}+\sum (1-b_{0i}) E_{0i}+\M_{\hat{Y}})|_{D'}=K_{D'}+\Theta'_{D'}+\N_{D'}$, let $\pi_*\Theta'_{D'}=:\Theta'_{D^{\nu}}$ and note that $\pi_*\Theta_{D'}=\Theta_{D^\nu}$. Note that $1-b_{1j}\leq 1-a_{1j}$ and $\pi_*(E_{0i}|_{D'})=0$ for all $E_{0i}$ with $a_{0i}>0$, where the latter follows from the efectivity of $\Theta_{D^{\nu}}$. We claim that $E_{i0}$ with $a_{i0}=0$ are also lc places of $(\hat{X}, \D+D+T,\M)$. For this, observe that $(X, \mcF, \D, \M)$ is log smooth at the generic point of the image of any such $E_{0i}$ since the latter is an lc center. By the proof of \cite[Prop 3.9]{CS}, it follows that any such lc center is a strata of $\rm{Sing} \mcF$ or a strata of $\lfloor \D \rfloor$. By \cite[Appendix]{Cano}, it then follows that any $E_{0i}$ with $a_{0i}=0$ is an lc place of $(\hat{X}, \D +D+T, \M)$.
        It follows that $(\Theta_{D^{\nu}})_{\leq 1}\geq \Theta'_{D^{\nu}}$. By adjunction of varieties $(D',\Theta'_{D'},\N)$ is generalized log canonical. We also have that $K_{D'}+\Theta'_{D'}+\N_{D'}=g^*(K_{\hat{X}}+\D+T+D+\M_{\hat{X}})|_{D'}=g^*(K_{D^{\nu}}+\Theta'_{D^{\nu}}+\N_{D^{\nu}})$, which implies $(D^{\nu},\Theta'_{D^{\nu}},\N)$ is generalized log canonical. Finally, we claim that $(D^{\nu},(\Theta_{D^{\nu}})_{\leq 1},\N)$ is also lc. Suppose not, then the support of $(\Theta_{D^{\nu}})_{\leq 1}-\Theta'_{D^{\nu}}$ consists of some non lc centers of $(D^{\nu},\Theta_{D^{\nu}},\N)$. Let us denote a component of this effective divisor by $Z$. We have already proved in the above paragraph that $Z$ is tangent to $\mcF$. However, the effective divisor $(\Theta_{D^\nu})_{\leq 1}-\Theta'_{D^\nu}$ is supported on $g_*(\sum_j(b_{1j}-a_{1j})E_{1j}|_{D'})$, where each component of this divisor is transverse to the foliation $\mcF$ as $E_{1j}$s are $\mcF'$-non-invariant exceptional divisors. This is a contradiction. Hence, $(D^{\nu},(\Theta_{D^{\nu}})_{\leq 1},\N_{D^{\nu}})$ is also lc. \\
        
        Now we prove the third part. By negativity lemma, it follows that $(X, \mcF, \D)$ is a dlt triple.
        Let $\pi: (X', \mcF', \D', \M)\to (X, \mcF, \D, \M)$ be a foliated log resolution of $(X, \mcF, \D, \M)$ such that $\M$ descends on $X'$ and such that $\pi$ does not extract $C$ (such resolution exists because $(X,\mcF,\D,\M)$ is a dlt gfq and $C$ is a lc center). Here $\D'$ is defined by $K_{\mcF'}+\D'+\M_{X'}=\pi^*(K_{\mcF}+\D+\M_X)$.  Note that $(X,\mcF,\D)$ is log smooth at the generic point of $C$. Then we can find an invariant surface $S$, such that $C$ is contained in $S$. Let $C'$ and $S'$ denote the strict transforms of $C$ and $S$ respectively. If $C\subset \rm{Sing}\mcF$, we can take $S$ to be a strong separatrix along $C$, thus $S'$ is a strong separatrix along $C'$. Replacing $S'$ with its normalization and writing $(K_{\mcF'}+\D'+\M_{X'})|_{S'}= K_{S'}+\Theta_{S'}+\mathbf{Q}_{S'}$, we have $\rm{coeff}_{C'}\Theta_{S'}=1$. Writing $(K_{S'}+\Theta_{S'}+\mathbf{Q}_{S'})|_{C'}=K_{C'}+\Theta_{C'}+\N_{C'}$, we have $\Theta_C=\pi_*\Theta_{C'}$. If $p \in \lfloor\Theta_C \rfloor$, then there exists $p' \in \lfloor \Theta_{C'}\rfloor$ such that $\pi(p')= p$. Then it's enough to show that $p'$ is an lc center of $(X', \mcF', \D',\M)$. For this, note that $K_{C'}+(\Theta_{C'})_{\leq 1}+\N_{C'}= (K_{S'}+(\Theta_{S'})_{\leq 1}+\mathbf{Q}_{S'})|_{C'}$ and $(S',(\Theta_{S'})_{\leq 1},\mathbf{Q})$ is glc. By inversion of adjunction, it follows that $p' \in \rm{lcc} (S', (\Theta_{S'})_{\leq 1}, \mathbf{Q})$. Now, let $\hat{X'}$ denote the formal completion of $X'$ along $S'$ and $T'= \sum T'_i$ the (possibly formal) separatrices meeting $S'$, then it follows from the proof of \cite[Lemma 8.9]{Spi} that $K_{S'}+(\Theta_{S'})_{\leq 1}+\mathbf{Q}_{S'}=(K_{\hat{X'}}+\D'+S'+T'+\M_{\hat{X'}})|_{S'}$. Again applying inversion of adjunction, we get $p' \in \rm{lcc} (\hat{X'}, \D'+S'+T',\M)$. By log smoothness, it follows that $p'$ is an lcc of $(X', \mcF',\D', \M)$ as required.\\
        Finally, let $I$ be the set of coefficients of $\D$ and all $\mu_j$'s such that $\M_{X'}=\sum \mu_j M_j$, where $\M$ descends on $X'$. Then by construction of $(\Theta_{S^{\nu}})_{\leq 1}$ and by \cite[Prop 4.9]{BZ}, the coefficients of $(\Theta_{S^{\nu}})_{\leq 1}=\pi_*((\Theta_{S'})_{\leq 1})$ are in the DCC set $D(I)$. Applying the same proposition again we see that the coefficients of $\lbrace\Theta_{C^{\nu}}\rbrace$ are in the DCC set $D(D(I))$.

    \end{proof}

\begin{remark}\label{adjn diff}
In the notation of the above proof, note that the effective divisor $(\Theta_{D^\nu})_{\leq 1}-\Theta'_{D^\nu}$ is supported on $h_{D'}(\sum_j(b_{1j}-a_{1j})E_{1j}|_{D'})$.
\end{remark}



    

\begin{lemma}(Stability of dlt gfqs under MMP) Let $(X,\mcF, \D,\M)/U$ be a rank two dlt gfq on a normal projective threefold $X$ equipped with a projective morphism $\pi:X\rightarrow U$ and $\phi:(X, \mcF,\D,\M) \dashrightarrow (X', \mcF', \D',\M)$ a $(K_\mcF+\D+\M_X)$-divisorial contraction or flip over $U$. Then $(X', \mcF', \D',\M)$ is also dlt. \end{lemma}
\begin{proof}
Indeed, we can argue similarly to \cite[Lemma 3.11]{CS}. Start with a foliated log resolution $g: Y \to X$ extracting only divisors $E$ with $a(E, \mcF,\D,\M) > - \epsilon(E)$ such that $\M$ descends to $Y$. We will show that $(X', \mcF', \D',\M)$ also admits such a foliated log resolution. Let $\overline{Y} \to Y \times X'$ denote the normalization of closure of the graph of $\phi \circ g$, $\mcG$ the induced foliation on $\overline{Y}$, $p: \overline{Y}\to Y$, $f:\overline{Y}\to X$ the induced morphisms and $F:= \sum F_i$ the reduced sum of all $f$-exceptional divisors. Let $h: Y' \to \overline{Y}$ be a foliated log resolution of $(\overline{Y}, \mcG, f_*^{-1}B+F)$ which is an isomorphism along the foliated log smooth locus of $(\overline{Y}, \mcG, f_*^{-1}B+F)$; note that $\M_{Y'}= g^*\M_{\overline{Y}}$. We claim that the induced morphism $g':Y' \to X'$ extracts only divisors $E'$ with $a(E', \mcF', B',\M) >-\epsilon(E')$.\\

Let $E'$ be a $g'$-exceptional divisor and $W:=c_{X'}(E')$. We claim that $a(E', X',\mcF',\D',\M)>-\epsilon(E')$. Let $\Sigma \subset X'$ be the flipped locus (if $\phi$ is a flip over $U$) or $\Sigma := \phi(\Ex \phi)$ (if $\phi$ is a divisorial contraction over $U$). Suppose $W \subset \Sigma$. Then by \cite[Lemma 3.38]{KM98}, $a(E', \mcF, \D,\M) < a(E', \mcF',\D',\M)$, while $a(E', \mcF, \D,\M) \geq -\epsilon(E')$ (since $(X,\mcF,\D,\M)$ is F-lc), thus $a(E', \mcF',\D',\M)> -\epsilon(E')$. Suppose $W \not \subset \Sigma$. Let $E'_Y$ denote the center of $E'$ on $Y$. 
Suppose, if possible, that $g$ is an isomorphism at the generic point of $E'_Y$. Then $Y$, $X$, $X'$, $\overline{Y}$ are all foliated log smooth and isomorphic to each other at the generic point of the center of $E'$. But $E'$ is then $h$-exceptional; this contradicts the choice of $h$. We thus infer that $X$ is not foliated log smooth at the generic point of $c_X(E')$. Now, let $l:=g^{-1}(c_X(E')) \subset Y$. Then $p:\overline{Y} \to Y$ is an isomorphism over the generic point of $l$. In particular, $\overline{Y}$ being foliated log smooth at the generic point of $\overline{l}:=p_*^{-1}l$, if $\overline{l}$ is not a divisor, it is not extracted by $h$. This is a contradiction. Thus $E'_Y$ is a $g$-exceptional divisor. Hence we have $-\epsilon(E')< a(E', \mcF, \D, \M) \leq a(E', \mcF', \D', \M)$. This proves our claim, thereby showing that $(X',\mcF', \D', \M)$ is a dlt gfq.
 
\end{proof}

\begin{lemma} (dlt modification of lc gfqs) Let $(X,\mcF, \D,\M)/U$ be a corank one lc gfq, where $X$ is a normal projective threefold equipped with a projective morphism $\pi:X\rightarrow U$. Then there exists a birational morphism $g: X' \to X$ from a $\mbQ$-factorial normal projective threefold with klt singularities such that letting $\mcG$ denote the pulled back foliation on $X'$ and $\D':=g_*^{-1}\D+\sum \epsilon(E_i)E_i$ (the sum runs over all $\pi$-exceptional divisors), $(X',\mcG, \D',\M)$ is dlt gfq and $K_\mcG+\D'+\M_{X'}=g^*(K_\mcF+\D+\M_X)$.\end{lemma}

\begin{proof}The arguments are similar to those of \cite[Theorem 8.1]{CS}. Let $g: W \to X$ be a foliated log resolution of $(X, \mcF,\D,\M)$ such that $\M$ descends to $W$, denote by $\mcG$ the induced foliation on $W$, let $\D_W$ be defined by $K_\mcG+\D_W+\M_W=g^*(K_\mcF+B+\M_X)$ and $\overline\D:=g_*^{-1}\D+\sum \epsilon(E_i)E_i$ (the sum runs over all $g$-exceptional divisors). Note that $\overline{\D}\geq \D_W$, and $(W,\mcG, \overline{\D},\M)$ is dlt. Let $\phi: W \dashrightarrow W'$ be a $(K_\mcG+\overline{\D}+\M_W)$-MMP over $X$. Observe that $K_\mcG+\overline{\D}+\M_W \equiv_X \overline{\D}-\D_W$. By the above remark, the induced gfq $(X',\mcG',\overline{\D}',\M)$ on $W'$ is dlt. It follows by an application of the negativity lemma that $\overline{\D}'=\D_W'$.
    
\end{proof}

Termination of flips for dlt gfqs does not follow directly from the lc case and requires considerable extra work. We first show special termination for dlt gfqs. The proofs proceeds by induction on the dimension of the lc centers. Then we use the structure of log terminal flips \cite[Lemma 2.8]{SS22} to reduce to the termination of flips for generalized pairs.
\begin{theorem}\label{gdlt-term}
    Starting from a dlt gfq $(X,\mcF, \D, \M)/U$ where $X$ is a $\mbQ$-factorial normal projective threefold equipped with a projective morphism $\pi:X\rightarrow U$ such that there exists $ 0 \leq B \leq \D$ such that $(X, B,\M)$ is gklt, there exists no infinite sequence of flips over $U$.
\end{theorem}
\begin{proof}
    Let $(X, \mcF, \D, \M)=:(X_1,\mcF_1, \D_1, \M) \dashrightarrow (X_2,\mcF_2, \D_2, \M) \dashrightarrow \cdots $ be an infinite sequence of flips. Let $\phi_i: (X_i,  \mcF_i, \D_i, \M) \dashrightarrow (X_{i+1}, \mcF_{i+1},\D_{i+1}, \M) $ be the $i$-th flip corresponding to the flipping contraction of an extremal ray $R_i \subset \overline{NE}(X_i/U)$. Let $C_i \subset X_i$ be a curve tangent to $\mcF_i$ such that $R_i= \mbR_+[C_i]$ (it exists thanks to Lemma \ref{dlt-lc} and Theorem \ref{lccone}).\\
    
    {\bf Step 1:} {\it After finitely many flips, \begin{enumerate}
        \item the flipping locus is disjoint from lc centers of $(X_i, \mcF_i, \D_i, \M)$ transverse to the foliation and zero dimensional generalized lc centers,
        \item no lc centers of $(X_i, \mcF_i, \D_i, \M)$ are contained in the flipping locus.\end{enumerate}} \vspace{0.5 mm}
    
    Let $S_i$ be a generalized lc center of $(X_i,  \mcF_i, \D_i, \M)$, which is transverse to the foliation $\mathcal{F}_i$. If $E$ is a geometric valuation over $X_i$ having center on $X_i$ equal to $S_i$, then by \cite[Lemma 3.11]{Spi}, we have that $a(E,X_i,\D_i,\M)=a(E,\mcF_i,\D_i,\M)=-1$. As $(X_i,\D_i,\M)$ is log smooth and $\M$ descends at the generic point of $S_i$ (by the definition of dlt gfq), $S_i$ is a strata of $\lfloor \D_i\rfloor$. 
    Thus it is enough to show that the flipping locus is eventually disjoint from $\lfloor\D_i\rfloor$ for $i>>0$ and we can assume $\dim S_i=2$. Let $C_i \subset X_i$ denote the flipping curve and $C_{i+1} \subset X_{i+1}$ denote the flipped curve. Then $C_{i+1}$ is tangent to $\mcF_{i+1}$. Indeed since $X_i$ is $\mbQ$-factorial, $(X_i, \mcF_i, \D_i, \M)$ dlt implies that so is $(X_i, \mcF_i, \D_i)$ by an application of negativity lemma. \cite[Theorem 11.3]{CS} then implies that $\mcF_i$ has non dicritical singularities and hence so does the induced foliation on the base of the flipping contraction (see also \cite[Lemma 3.31]{CS}). Thus $C_{i+1}$ is tangent to $\mcF_{i+1}$, because otherwise, contracting it would create a dicritical singularity of the base of the flipping contraction. We claim that $C_{i+1}$ is not contained in any component of $\lfloor \D_{i+1}\rfloor$. Indeed, if $C_{i+1} \subset S'_{i+1}$ for some component $S'_{i+1}$ of $\lfloor\D_{i+1}\rfloor$, then $C_{i+1}$  is invariant with respect to the restricted foliation $\mcF_{S'_{i+1}}$. By Proposition \ref{adj}, $(K_{\mcF_{i+1}}+\D_{i+1}+\M_{X_{i+1}})|_{S'_{i+1}}= K_{\mcF_{S'_{i+1}}}+\Theta_{S'_{i+1}}+\N_{S'_{i+1}}$, where the latter is an lc gfq. As a result, the generalized discrepancy $a(C_{i+1}, \mcF_{S'_{i+1}}, \Theta_{S'_{i+1}}, \N)=0$. For the same reason, letting $S'_i \subset X_i$ denote the proper transform of $S'_{i+1}$, $a(C_{i+1}, \mcF_{S'_i}, \Theta_{S'_i}, \N) \geq 0$. Here $(\mcF_{S'_i}, \Theta_{S'_i}, \N)$ is obtained by applying adjunction on $S'_i$. But by the arguments of \cite[Lemma 3.38]{KM98}, we have that $a(C_{i+1}, \mcF_{S'_{i+1}}, \Theta_{S'_{i+1}}, \N)> a(C_{i+1}, \mcF_{S'_i}, \Theta_{S'_i}, \N)$. So we have a contradiction, thereby proving the claim. In words, each flip $\phi_i$ where $C_i$ is contained in some component $S_i$ of $\lfloor\D_i \rfloor$ drops the Picard number of $S_i$ and does not increase the Picard number of any component of $\lfloor\D_{i+1}\rfloor$. Thus, there can only be finitely many such flips. On the other hand, if $C_i$ intersects some component $S_i$ of $\lfloor\D_i \rfloor$, then $(S_i \cdot C_i) >0 $. If $S_{i+1}=\phi_i(S_i)$, then by Lemma \ref{dlt-lc} and Theorem \ref{CT}, $(S_{i+1} \cdot C_{i+1}) <0$, which is ruled out by the above arguments. \\
    
    Now, we show that after finitely many flips, no lcc of the dlt gfq $(X_i, \mcF_i, \D_i, \M)$ is contained in the flipping locus. This is essentially a consequence of the fact that for a dlt gfq $(X, \mcF, \D, \M)$, there are only finitely many lc centers of codimension atleast two not contained in $\rm{Supp} \lfloor\D \rfloor$, combined with the negativity lemma. Indeed, let $Z$ be a lc center of $(X_i, \mcF_i, \D_i,\M)$ of codimension atleast two such that $Z$ is not a stratum of $\lfloor \D \rfloor$. Let $\pi: (X', \mcF') \to (X_i, \mcF_i)$ be a foliated log resolution of $(X_i, \mcF_i, \D_i, \M)$ which only extracts klt places of $(X_i, \mcF_i, \D_i, \M)$ (in particular, $\pi$ is an isomorphism at the generic point of $Z$) and such that $\M$ descends to $X'$. Let $Z' \subset X'$ be the strict transform of $Z$ in $X'$. Write $K_{\mcF'}+ \D'+\M_{X'}= \pi^*(K_{\mcF_i}+\D_i+\M_{X_i})$. Then clearly $Z'$ is an lc center of $(X',\mcF',\D'_+)$, where $\D'_+$ denotes the positive part of $\D'$. Then it follows from the proof of \cite[Proposition 3.9]{CS} that either $Z'$ is a stratum of $\lfloor \D'_+\rfloor$, or that it is a stratum of $\rm{Sing} \mcF'$. But since $Z$ is not a strata of $\lfloor \D \rfloor$ and $\pi$ is an isomorphism at the generic point of $Z$, it follows that $Z$ is a stratum of $\rm{Sing} \mcF$. In particular, there are only finitely many possibilities for $Z$. Now, returning to our set up, if $Z$ is a lc center of $(X_i, \mcF_i, \D_i, \M)$ contained in the flipping locus, the negativity lemma implies that the discrepancy of any exceptional divisor centered over $Z$ increases after the flip. Since there can only be finitely many such lc centers of a dlt gfq, we conclude that after finitely many flips, no lc center is contained in the flipping locus.  
     Now, if the flipping locus intersects a zero-dimensional lc center of $(X_i, \mcF_i, \D_i, \M)$, then the lc center is contained in the flipping locus. Thus, after fintely many steps, this can not happen.\\
     
    {\bf Step 2:} {\it After finitely many flips, the flipping locus is disjoint from generalized lc centers of dimension one tangent to the foliation.}\\
    
    Let $C$ be a one dimensional lc center of the dlt gfq $(X_i,\mathcal{F}_i,\D_i,\M)$, tangent to $\mathcal{F}_i$. By Proposition \ref{adj} we have that $(K_{\mathcal{F}_i}+\D_i+\M_i)|_{C^\nu}=K_{C^\nu}+\theta_i+\N_i$. 
    By step $1$ we know that after finitely many flips, the flipping curves are disjoint from $\lfloor\Delta_i\rfloor$, and $0$-dimensional lc centers. First, we claim that after finitely many flips, each $\phi_i$ is an isomorphism at the points of $(\theta_i)_{>1}$. Indeed, by proposition \ref{adj} the points of $(\theta_i)_{>1}$ are generalized log canonical centers of $(X_i,\mcF_i,\D_i,\M)$. Hence, our claim follows from the previous step. Now we focus on $(\theta_i)_{\leq 1}$.  Let $(X_{i+1},\mathcal{F}_{i+1},\Delta_{i+1},\M)$ be the flip of $(X_i,\mathcal{F}_i.\Delta_i,\M)$, and as this flip is isomorphism at the generic point of $C$ by step $1$, let $C'$ be its strict transform in $X_{i+1}$, which is a one-dimensional lc center of the dlt gfq $(X_{i+1},\mathcal{F}_{i+1},\Delta_{i+1},\M)$. We again apply sub-adjunction to get $(K_{\mathcal{F}_{i+1}}+\D_{i+1}+M_{X_{i+1}})|_{C'^\nu}=K_{C'^\nu}+\theta_{i+1}+\N_{i+1}$, where $\theta_{i+1}\geq 0$. We claim that $\theta_{i}\geqslant\theta_{i+1}\geqslant 0$, and that strict inequality holds along the points supported on the intersection of $C$ and the flipping curve $C_i$. Indeed, let $X_i \xleftarrow{p}\hat{X} \xrightarrow{q}X_{i+1}$ be the normalization of closure of the graph of $\phi_i: X_i \dashrightarrow X_{i+1}$ and $\hat{C} \subset \hat{X}$ the strict transform of $C$. Note that $C^\nu \cong C'^\nu$ and that via this isomorphism, $\N_i$ and $\N_{i+1}$ are same as b-divisors. This follows from the construction of Proposition \ref{adj}(3). Then $p^*(K_{\mcF_i}+\D_i+\M_{X_i})= q^*(K_{\mcF_{i+1}}+\D_{i+1}+\M_{X_{i+1}})+E$ for some $E\geq 0$ which is exceptional over both $X_i$ and $X_{i+1}$. Restricting this, we get $p^*(K_{C^\nu}+\theta_i+\N_i)=q^*(K_{C'^\nu}+\theta_{i+1}+\N_{i+1})+E|_{\hat{C}^{\nu}}$, where $\hat{C}^{\nu}$ is the normalization of $\hat{C}$ (by abuse of notation, we still denote by $C^\nu \xleftarrow{p}\hat{C}^\nu \xrightarrow{q}C'^{\nu}$ the induced morphisms). Since the difference between $\theta_i$ and $\theta_{i+1}$ comes from $E|_{\hat{C}^\nu}$, the claim follows.\\
    
  Let $\theta_i'=(\theta_i)_{\leq 1}$ be the boundary such that $(C^{\nu},\theta_i',\N_i)$ is glc. By Proposition \ref{adj}$(3)$ and Step $1$ we know that after finitely many flips the flipping locus is disjoint from the support of $\theta_i-\theta'_i$. If any component of $\theta'_i$ is supported on the intersection of $C$ with a flipping curve, then the coefficient of that component strictly drops. By Proposition \ref{adj} we know that coefficients of $\theta'_i$ belong to a DCC set, hence after finitely many flips all one dimensional glc centers tangent to the foliation are disjoint from the flipping locus. \\
    
     {\bf Step 3:} {\it After finitely many flips, the flipping locus is disjoint from all generalized lc centers .}\\
     
     Let $S_i$ be the normalization of a two-dimensional lc center of the gfq $(X_i,\mathcal{F}_i.\Delta_i,\M)$ which is tangent to the foliation $\mcF_i$, and $S_{i+1}$ be the normalization of its strict transform in the flip $X_{i+1}$. By adjunction we have that $(K_{\mathcal{F}_i}+\Delta_i+\M_i)|_{S_{i}}=K_{S_i}+\beta_i+\N_i$. Suppose we have the following sequence of flips
     \begin{equation*}
\xymatrix{    
               (X_i, \mathcal{F}_i,\Delta_i,\M)\ar[dr]_{f_i} \ar@{-->}[rr] &     & (X_{i+1},\mathcal{F}_{i+1},\Delta_{i+1},\M)\ar[dl]^{f_0^+}\ar[dr]_{f_{i+1}} \ar@{-->}[rr]& & (X_{i+2},\mathcal{F}_{i+2},\Delta_{i+2},\M)\ar[dl]^{f_2^+} \cdots&\\
                                             &Z_i&                  & Z_{i+1}&
                                             }
 \end{equation*} 
 Restricting this diagram to $S_i$ we get the following diagram-
 \begin{equation*}
\xymatrix{    
               (S_i,\beta_i,\N)\ar[dr]_{f_i|_{S_i}} \ar@{-->}[rr] &     & (S_{i+1},\beta_{i+1},\N)\ar[dl]^{f_i^+|_{S_{i+1}}}\ar[dr]_{f_{i+1}|_{S_{i+1}}} \ar@{-->}[rr]& & (S_{i+2},\beta_{i+2},\N)\ar[dl]^{f_1^+|_{S_{i+2}}} \cdots&\\
                                             &T_i&                  & T_{i+1}&
                                             }
 \end{equation*} 
 where $T_i$ is the normalization of the image of $f_i|_{S_i}$. A priori, though $(S_i,\beta_i,\N)$ is not a generalized lc pair, by lemma \ref{adj}, $(S_i,\beta_i':=(\beta_i)_{\leq 1},\N)$ is generalized lc. We claim that for $i\gg 0$, $(\beta_i-\beta_i')\cdot C_i=0$. This follows from the fact that $\beta_i-\beta_i'$ is supported on codimension two lc centers of $(X_i, \mcF_i, \D_i,\M)$ contained in $S_i$ by Proposition \ref{adj}. By step $2$, $C_i$ is disjoint from such centers.
 
 
 This implies after finitely many flips $(K_{S_i}+\beta'_i+\N)\cdot C_i<0$, in particular we get the following sequence of \textit{ample small quasi-flips} \cite[Def 2.13]{LMT}:
 \begin{equation*}
\xymatrix{    
               (S_i,\beta'_i,\N)\ar[dr]_{f_i|_{S_i}} \ar@{-->}[rr] &     & (S_{i+1},\beta'_{i+1},\N)\ar[dl]^{f_i^+|_{S_{i+1}}}\ar[dr]_{f_{i+1}|_{S_{i+1}}} \ar@{-->}[rr]& & (S_{i+2},\beta'_{i+2},\N)\ar[dl]^{f_1^+|_{S_{i+2}}} \cdots&\\
                                             &T_i&                  & T_{i+1}&
                                             }
 \end{equation*}
    Now, by the construction of \cite[Lemma 3.2]{LMT}, there exists a generalized dlt modification $(\tilde{S}_j,\tilde{\beta}_j,N)\rightarrow (S_j,\beta_j',\N) $ for each $j$, such that $(\tilde{S}_j,\tilde{\beta}_j,N)\dashrightarrow (\tilde{S}_{j+1},\tilde{\beta}_{j+1},N)$ is a $(K_{\tilde{S}_j}+\tilde{\beta}_j+\N_{\tilde{S}_j})$-MMP over $T_j$. If $({S}_i,{\beta}_i,N)\dashrightarrow ({S}_{i+1},{\beta}_{i+1},N)$ is not an isomorphism for all $i>>0$, that would give rise to a $(\tilde{S}_j,\tilde{\beta}_j,N)$-MMP which does not terminate as in the proof of Theorem \ref{lcterm}, which is a contradiction. Hence, such sequence must be isomorphism after finitely many $i$, and the flipping locus is disjoint from $S_i$ for large $i$.\\

    We have shown that for $i \gg 0$, $\phi_i$ is disjoint from all lc centers of $(X_i, \mcF_i, \D_i, \M)$. Thus, it suffices to show that any sequence of flips $\phi_i$ for a log terminal gfq $(X_i, \mcF_i, \D_i, \M)$ terminates. The rest of the proof is similar to \cite[Theorem 2.1]{SS22} to which we refer for more details. By negativity lemma, $\mcF_i$ has terminal singularities at the generic point of the flipping curve $C_i$. We can find a unique $\mcF_i$-invariant surface $S$ in a small analytic neighbourhood $U$ of $C_i$ containing $C_i$; note that $S$ is analytically $\mbQ$-Cartier. Moreover, $S$ is the unique $\mcF_i$-invariant divisor meeting $C_i$. By \cite[Lemma 2.8]{SS22}, $S\cdot C_i=0$ and from \cite[Lemma 8.9]{Spi}, $(K_{\mcF_i}+\D_i+\M_{X_i})\cdot C_i \geq (K_{X_i}+\D_i+S+\M_{X_i})\cdot C_i$. In particular, each $\phi_i$ is a $(K_{X_i}+(1-\epsilon)\D_i+\M_{X_i})$-flip, where the latter is gklt by Lemma \ref{Fdlt-dlt}. Hence, the desired termination follows from \cite[Theorem 1.5]{HL}.

\end{proof}

\subsection{Minimal model program for lc gfqs}
In the earlier parts of this paper, we saw how the MMP for foliated lc triples follows from the MMP for foliated dlt triples. In a similar vein, in this section, we observe how the MMP for lc gfqs follows from MMP for dlt gfqs.

\begin{theorem} \label{glccone}
    Let $(X,\mathcal{F},\D,\M)/U$ be a rank two lc gfq where $X$ is a normal projective threefold equipped with a projective morphism $\pi: X \to U$. Then there exists a countable collection of rational curves $C_i$ on $X$ tangent to $\mcF$ such that.
    \begin{enumerate}
        \item $\overline{NE}(X/U)=\overline{NE}(X/U)_{(K_{\mathcal{F}}+\D+\M_X)\geqslant 0}+\sum \mbR_+\cdot [C_i]$
        \item $-6\leqslant (K_X+\D+\M_X)\cdot C_i<0$
        \item For any ample over $U$ divisor $H$, $(K_{\mathcal{F}}+\D+M_X+H)\cdot C_i\leqslant 0$ for all but finitely many $i$.
        \end{enumerate}
\begin{proof}
    The proof is similar in spirit to that of Theorem \ref{lccone} once we have the cone theorem for $\mbQ$-factorial dlt gfqs. \\

Let $(X',\mcF', \D', \M)$ be  $\mbQ$-factorial dlt gfq. For any ample $\mbR$-divisor $A$, since $K_{\mcF'}+ \D'+ \M_{X'}+A$ is $\mbR$-linearly equivalent over $U$ to a foliated lc triple by Lemma \ref{dlt-lc}, the relative cone theorem for $(X',\mcF', \D', \M)$ over $U$ follows from the relative F-lc cone theorem (Theorem \ref{lccone}) using similar arguments as in the proof of the cone theorem for dlt pairs; see for example \cite[Theorem 3.35]{KM98}.\\

Let $\pi: (X',\mcF', \D', \M) \to (X,\mcF, \D, \M)$ be a foliated dlt modification. Now we can deduce the cone theorem for $K_\mcF+\D+\M_X$ from the cone theorem for $K_{\mcF'}+\D'+\M_{X'}$ using the same arguments as in the proof of Theorem \ref{lccone}. 
\end{proof}
\end{theorem}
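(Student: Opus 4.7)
The plan is to follow the same template as the proof of the F-lc cone theorem (Theorem \ref{lccone}): pass to an F-gdlt modification, establish the cone theorem upstairs, and push it down along the induced surjection of Mori cones. Compared to the F-lc case, one extra preliminary reduction inside the generalized setting is needed, namely the cone theorem for F-gdlt pairs with b-semiample moduli part.

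For the F-gdlt reduction, let $(\mcF', B', \M)$ be an F-gdlt pair on a $\mbQ$-factorial klt threefold $X'$ with $\M$ b-semiample. Each prime component $E$ of $B'$ satisfies $\mult_E B' \leq \epsilon(E) \leq 1$, so for every $\varepsilon > 0$ we have $\lrd{(1-\varepsilon) B'}\rrd = 0$. Lemma \ref{dlt-lc} applied to $(\mcF', (1-\varepsilon)B', \M)$ then produces an effective $\Delta_\varepsilon$ with $(\mcF', \Delta_\varepsilon)$ F-lc and $K_{\mcF'} + \Delta_\varepsilon \sim_\mbQ K_{\mcF'} + (1-\varepsilon)B' + \M_{X'}$, so Theorem \ref{lccone} supplies a cone decomposition at level $\varepsilon$. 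A $(K_{\mcF'} + B' + \M_{X'})$-negative extremal ray $R$ remains $(K_{\mcF'} + (1-\varepsilon)B' + \M_{X'})$-negative for all $\varepsilon$ small enough (trivially if $B' \cdot R \geq 0$, otherwise for $\varepsilon$ less than an explicit positive ratio), so a diagonal/compactness selection over a sequence $\varepsilon_n \downarrow 0$, exactly as in the proof of the classical dlt cone theorem \cite[Theorem 3.35]{KM98}, produces a single countable collection of rational curves verifying (1)--(3) for $(\mcF', B', \M)$.

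For the descent, let $\pi: (\mcF', B', \M) \to (\mcF, B, \M)$ be the F-gdlt modification recorded in the earlier remark, and set $C_i := (\pi_* C_i')_{\mathrm{red}}$. I would follow the proof of Theorem \ref{lccone} essentially verbatim. For (1): if a divisor $D$ witnessed failure of the claimed decomposition, then after adjusting by a multiple of an ample divisor $D_{=0}$ would cut out a nonzero extremal face of $\overline{NE}(X)$; via Lemma \ref{lin}, the preimage $\pi_*^{-1} D_{=0} \cap \overline{NE}(X')$ is a non-empty extremal face of $\overline{NE}(X')$ that is $(K_{\mcF'}+B'+\M_{X'})$-negative away from the lower-dimensional face $\pi_*^{-1}(0)$, hence contains one of the $C_i'$, contradicting the hypothesis on $D$. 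For (2), the ratio $0 < (K_X + B + \M_X) \cdot C_i / (K_X + B + \M_X) \cdot \pi_* C_i' \leq 1$ combined with projection formula transports the $-6$ bound from upstairs. For (3), the same "negative extremal rays do not accumulate" argument used in Theorem \ref{lccone} applies verbatim.

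The main obstacle is the perturbation step in the F-gdlt case: one has to verify that each $(K_{\mcF'}+B'+\M_{X'})$-negative extremal ray is captured by the cone theorem at some $\varepsilon_n$, and that no negativity is lost in the limit. This is standard in the classical dlt setting and adapts directly here, because shrinking $B'$ by $(1-\varepsilon)$ is exactly what Lemma \ref{dlt-lc} needs in order to produce an F-lc pair in the same $\mbQ$-linear equivalence class as $K_{\mcF'}+(1-\varepsilon)B'+\M_{X'}$. Once this first step is in hand, the pushforward step is essentially formal via the linear algebra Lemma \ref{lin} and projection formula.
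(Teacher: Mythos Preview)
Your proposal is correct and follows essentially the same route as the paper: first reduce the F-gdlt case to the F-lc cone theorem via Lemma \ref{dlt-lc} applied to $(\mcF',(1-\varepsilon)B',\M)$ together with the limiting argument of \cite[Theorem 3.35]{KM98}, and then descend from the F-gdlt modification to $(\mcF,B,\M)$ exactly as in Theorem \ref{lccone} using Lemma \ref{lin} and the projection formula. In fact you supply more detail than the paper, which simply points to these two ingredients and says ``we skip the details.''
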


We will need the following lemma for proving the contraction and flip theorems for lc gfqs.

\begin{lemma}\label{Fdlt-dlt}
Let $(X, \mcF, \D, \M)$ be a $\mbQ$-factorial dlt gfq with $\lfloor \D \rfloor=0$. Then for any reduced $\mcF$-invariant divisor $D=\sum D_i$, and $ \epsilon >0$,  $(X, \D+(1-\epsilon)D,\M)$ is gklt.
    
\end{lemma}
\begin{proof}
    By definition of dlt gfq, there exists a log resolution $\pi:(X',\mcF')\to (X, \mcF)$ of $(X, \mcF, \D, \M)$ which extracts only klt places of $(X, \mcF, \D, \M)$ and to which $\M$ descends. Let $E_{i0}$ be the collection of $\pi$-exceptional invariant divisors,  $E_{j1}$ the $\pi$-exceptional non invariant ones, $D':=\pi_*^{-1}D$ and $\D':=\pi_*^{-1}\D$. Write $K_{\mcF'}+\D'+\sum E_{j1} +\M_{X'}=\pi^*(K_\mcF+\D+\M_X)+\sum a_{j1}E_{j1}+\sum a_{i0}E_{i0}$, where $a_{j1}, a_{i0}>0$ for all exceptional divisors $E_{j1}$ and $E_{i0}$. Similarly, we can write $K_{X'}+ \D'+D'+\sum E_{j1}+\sum E_{i0}+\M_{X'}=\pi^*(K_X+\D+D+\M_X)+\sum b_{j1}E_{j1}+\sum b_{i0}E_{i0}$. Then it follows from the arguments of \cite[Lemma 8.14]{Spi} that $b_{j1} \geq a_{j1}$ and $b_{i0} \geq a_{i0}$. In particular, $b_{j1}, b_{i0} >0$. Now writing the corresponding equation for $K_X+\D+(1-\epsilon)D+\M_X$, $K_{X'}+\D'+(1-\epsilon)D'+\sum E_{j1}+\sum E_{i0}+\M_{X'}=\pi^*(K_X+\D+(1-\epsilon)D+\M_X)+\sum b^\epsilon_{j1}E_{j1}+\sum b^\epsilon_{i0}E_{i0}$, we have $b^\epsilon_{j1}\geq b_{j1}$ and $b^\epsilon_{i0}\geq b_{i0}$. From this, it follows that $(X, \D+(1-\epsilon)D, \M)$ is gklt.
\end{proof}
We are now in a position to develop the MMP for lc gfqs.

\begin{theorem} \label{glcCT}
Let $(X,\mcF, \D, \M)/U$ a rank two lc gfq where $X$ is a projective threefold equipped with a projective morphism $\pi: X \to U$ such that $(X,B, \M)$ is gklt for some $0\leq B\leq \D$. Let $R \subset \overline{NE}(X/U)$ be a $(K_\mcF+\D+\M_X)$-negative exposed extremal ray. Then there exists a contraction $c_R: X \to Y$ over $U$ associated to $R$, where $Y$ is a normal projective variety of klt type. Moreover, $c_R$ satisfies the following properties:
    \begin{enumerate}
        \item If $L\in \Pic X$ is such that $L \equiv_Y 0 $, then there exists $M\in \Pic Y$ with $c_R^*M=L$,
        \item If $X$ is $\mbQ$-factorial and $c_R$ is a divisorial or Fano contraction, then $Y$ is also $\mbQ$-factorial.
    \end{enumerate}
    
\end{theorem}
\begin{proof}
    The arguments are parallel to those used in Theorem \ref{CT}, so we provide only a sketch of the proof. Let $\pi: (\overline{X}, \overline{\mcF}) \to (X, \mcF)$ be a $\mbQ$-factorial dlt modification of $(X, \mcF, \D, \M)$ which factors through a small $\mbQ$-factorialization $h:\tilde{X} \to X$ of $X$ . As before, write $K_{\overline{\mcF}}+\overline{\D}+\M_{\overline{X}}=\pi^*(K_{\mcF}+\D+\M_X)$, $K_{\overline{X}}+\overline{B}+E+\M_{\overline{X}}=\pi^*(K_X+B+\M_X)+F$, where $\overline{\D}:=\pi_*^{-1}\D+\sum \epsilon(E_i)E_i$, $\overline{B}:=\pi_*^{-1}B$, $E, F \geq 0$ are $\pi$-exceptional divisors which contain all exceptional divisors of $h$. Now, $(\overline{X}, \overline{B}+E, \M)$ is gklt by Lemma \ref{Fdlt-dlt}. Let $H_R$ be a supporting Cartier divisor of $R$. If $H_R$ is not big, by an application of bend and break as in Theorem \ref{CT}, it follows that $(K_X+B+\M_X) \cdot R <0$, so we can use the contraction theorem for klt pairs to get $c_R$. Otherwise, we may assume $(K_X+B+\M_X)\cdot R>0$. In this case, we run a $(K_{\overline{\mcF}}+\overline{\D}+\M_{\overline{X}})$-MMP with scaling of $\overline{A}:=\pi^*A$. Let $i_0$ be the first step of this for which the nef threshold of $\overline{A}_{i0}$ with respect to $K_{\overline{\mcF}_{i0}}+\overline{\D}_{i0}+\M_{\overline{X}_{i0}}$, say $\lambda$ becomes less that $1$ and $\phi: \overline{X} \dashrightarrow \overline{X}'$ the induced birational contraction. Let $\overline{X}':=\overline{X}_{i0}$. Pick $\lambda < \lambda' <1$ such that $K_{\mcF}+\D+\lambda'A$ is big and $s>0$ small enough such that if $K:=(K_{\overline{\mcF}}+\overline{\D}+\M_{\overline{X}}+\lambda'\overline{A})+s(K_{\overline{X}}+\overline{B}+E+\M_{\overline{X}})$, then $K$ is big and $\phi $ is $K$-negative. Now, we run a $K':=\phi_*K$-MMP $\psi:\overline{X}' \dashrightarrow \overline{X}''$ which is $(K_{\overline{X}'}+\overline{B}'+E'+\M_{\overline{X}'})$-negative, $(K_{\overline{\mcF}'}+\overline{\D}'+\M_{\overline{X}'}+\lambda'\overline{A}')$-trivial and $\overline{H}_R$-trivial. Then $\psi \circ \phi$ contracts the divisorial part of $\pi^{-1} \rm{loc}R \cup \rm{Ex} \pi$. We may manufacture another supporting Cartier divisor $\tilde{H}_R$ of $R$ and as in the proof of Theorem \ref{CT} such that letting $f: X \dashrightarrow \overline{X}''$ the induced map, $f_*\tilde{H}_R-(K_{\overline{X}''}+\overline{B}''+\M_{\overline{X}''})$ is nef and big. Now, $(\overline{X}'', \overline{B}'', \M)$ is gklt by Lemma \ref{Fdlt-dlt} and since $\psi$ is $(K_{\overline{X}'}+\overline{B}'+E'+\M_{\overline{X}'})$-negative. Thus $f_*\tilde{H}_R$ is then semiample by basepoint free theorem (see \cite[Theorem 2]{IJM} for a more general result). In particular, so is $\tilde{H}_R$, giving us the contraction $c_R: X \to Y$. The descent of numerically trivial divisors follows same arguments as Theorem \ref{CT}.
\end{proof}

\begin{theorem}\label{glcflips}
     Let $(X, \mcF, \Delta, \M)/U$ be a projective rank two lc gfq such that $(X, B, \M)$ is klt for some $B \leq \Delta$, and $\pi:X\rightarrow U$ be a projective morphism of projective varieties and $\dim X =3$. Let $f: X \to Z $ be the contraction of a $(K_\mcF+\Delta+\M )$-negative extremal ray over $U$. Then the log canonical model $(X^+, \mcF^+, \Delta^+, \M)$ of $(K_\mcF+\Delta+\M_X)$ over $Z$ exists. Moreover, letting $B^+$ denote the strict transform of $B$ on $X^+$, $(X^+, B^+,\M)$ is klt and $(X^+,\mcF^+, \Delta^+,\M)$ is lc.
\end{theorem}
\begin{proof}
    Since we have the dlt modification for lc gfqs and the full MMP for dlt gfqs, we can argue as in Theorem \ref{lcflip}. Let $g: (\overline{X},\overline{\mcF}, \overline{\D}, \M)\to (X, \mcF, \D, \M)$ be a dlt modification of $(X, \mcF, \D, \M)$. Let $K_{\overline{\mcF}}+\overline{\D}+\M_{\overline{X}}=\pi^*(K_{\mcF}+\D+\M_X)$ and $K_{\overline{X}}+\overline{B}+E+\M_{\overline{X}}=\pi^*(K_X+B+\M_X)+F$ (notation as in proof of Theorem \ref{glcCT}). 
    Let $\phi: \overline{X}\dashrightarrow \overline{X}'$ be a $(K_{\overline{\mcF}}+\overline{\D}+\M_X)$-MMP over $Z$ and $\psi:\overline{X}' \dashrightarrow \overline{X}''$ be a $K:=K_{\overline{\mcF}}+\overline{\D}+\M_{\overline{X}}+s(K_{\overline{X}}+\overline{B}+E+\M_{\overline{X}})$-MMP over $Z$. Then for small enough $s>0$, $\psi$ is $(K_{\overline{\mcF}'}+\overline{\D}'+\M_{\overline{X}'})$-trivial and $\psi \circ \phi$ contracts the divisorial part of $\Ex g$ and $K'':= (\psi\circ \phi)_*K$ is semiample over $Z$ by the basepoint free theorem. This forces $K_{\overline{\mcF}''}+\overline{\D}''+\M_{\overline{X}''}$ to be the same. Then the flip $(X, \mcF, \D, \M) \dashrightarrow (X^+, \mcF^+, \D^+, \M)$ is given by its semiample fibration. $(\overline{X}'', \overline{B}'',\M)$ is gklt by Lemma \ref{Fdlt-dlt} and the semiample fibration is trivial with respect to this gklt pair.
\end{proof}

We also have the following corollary to Lemma \ref{gdlt-term}.

\begin{corollary}\label{glcterm} Starting from a projective lc gfq $(X,\mcF, \D, \M)/U$ such that $(X,B, \M)$ is gklt for some $B \leq \D$, there exists no infinite sequence of $(K_\mcF+\D+\M_X)$-MMP over $U$.
   \begin{proof}
Since we have termination of MMP for dlt gfqs, the arguments are parallel to those used in Theorem \ref{lcterm}. Let\begin{equation*}
\xymatrix{    
               (X_0, \mathcal{F}_0,\Delta_0, \M)\ar[dr]_{f_0} \ar@{-->}[rr] &     & (X_1,\mathcal{F}_1,\Delta_1, \M)\ar[dl]^{f_0^+}\ar[dr]_{f_1} \ar@{-->}[rr]& & (X_2,\mathcal{F}_2,\Delta_2, \M)\ar[dl]^{f_1^+} \cdots&\\
                                             &Z_0&                  & Z_1&
                                             }
 \end{equation*} 
 be a sequence of lc gfq MMP starting from $(X_0, \mcF_0, \D_0,\M):= (X, \mcF, \D, \M)$. We go to a dlt modification $(\overline{X_0}, \overline{\mcF_0},\overline{\Delta_0}, \M)$ of $(X_0,\mcF_0,\Delta_0,\M)$ and run a $(K_{\overline{\mcF_0}}+\overline{\Delta_0}+\M_{\overline{X_0}})$-MMP $\phi_0:\overline{X_0} \dashrightarrow \overline{X_0}'$ followed by a $K:=K_{\overline{\mathcal{F}_0}'}+\overline{\Delta_0}'+\M_{\overline{X_0}'}+s(K_{\overline{X}_0'}+\overline{B}'+E^{'}+\M_{\overline{X_0}'})$-MMP $\psi_0: \overline{X_0}' \dashrightarrow \overline{X_0}''$ over $Z_0$ for $s>0$ small as above. As $K_{\overline{\mathcal{F}_0}''}+\overline{\Delta_0}''+\M_{\overline{X_0}''}$ is semi-ample over $Z_0$, so is $K_{\overline{\mathcal{F}_0}'}+\overline{\Delta_0}'+\M_{\overline{X_0}'}$. $(X_1, \mathcal{F}_1,\Delta_1,\M)$ being the ample model, we have a morphism $\pi_1:\overline{X}_0'\rightarrow X_1$, given by the semiample fibration of $K_{\overline{\mathcal{F}_0}'}+\overline{\Delta_0}'+\M_{\overline{X_0}'}$ over $Z_0$.  Since $(\overline{X_0}', \overline{\mcF_0}', \overline{\Delta_0}',\M)$ is dlt gfq, we see that $\pi_1$ is a dlt modification of $(X_1, \mcF_1, \Delta_1,\M)$. Now we can again repeat the process starting from $(X_1, \mcF_1, \Delta_1,\M)$ over $Z_1$. Hence we get a sequence of foliated flips and divisorial contractions over $U$, in particular over Spec $\mbC$ starting from a dlt gfq $(\overline{X_0},\overline{\mathcal{F}_0},\overline{X}_0,\overline{\Delta_0})$ which must terminate by Theorem \ref{gdlt-term}. Hence the starting sequence of log canonical flips cannot be infinite.
 
   \end{proof} 
\end{corollary}

\subsection{A basepoint free theorem}

As an application of the MMP for lc gfqs developed earlier, we prove the following basepoint free theorem for foliations. Our approach is somewhat different from that of the F-dlt case treated in \cite{CS}.

\begin{theorem} \label{bpft}
    Let $\pi:X\rightarrow U$ be a projective morphism of normal projective varieties and let $(X,\mathcal{F},\Delta, \M)/U$ be a corank one lc gfq such that $(X, B, \M)$ is gklt for  some $0 \leq B \leq \Delta$ and $\dim X=3$. If $K_{\mathcal{F}}+\Delta+\M_X+A$ is nef over $U$ for some ample over $U$ $\mbR$-divisor $A$ on $X$, then $K_{\mathcal{F}}+B+\M_X+A$ is semiample over $U$.
\end{theorem}
\begin{proof}
 We divide the proof into two cases:
\begin{enumerate}
\item Suppose $K_{\mathcal{F}}+\Delta+\M_X+A$ is big over $U$. In this case, we use some ideas from the proof of \cite[Theorem 1.6]{MMPlcintegrable}. Let $L:=K_\mcF+\Delta+\M_X+A$ and assume $L$ is NQC, i.e. there exist positive real numbers $a_i$ and nef over $U$ $\mbQ$-Cartier $\mbQ$-divisors $L_i$ such that $L= \sum_i a_i L_i$. We can choose $0< e \ll 1$ such that if $\hat{A}:= A+e(K_\mcF+\Delta+\M_X)-e(K_X+B+\M_X)$, then $\hat{A}$ is ample. Letting $K:=(1-e)(K_\mcF+\Delta+\M_X)+e(K_X+B+\M_X)$, note that $L=K+\hat{A}$. Since $K_\mcF+\Delta+\M_X+A$ is big over $U$, so is $K_\mcF+\Delta+\M_X+\hat{A}$. Note that for any $l \in \mathbb{N}$, $(X,\mcF, \Delta,\M+\hat{A}+lL)$ is a lc gfq with NQC moduli part. Let $\phi: X \dashrightarrow Y$ be a $(K_\mcF+\Delta+\M+\hat{A}+lL)$-MMP, where $l \in \mbN$; it follows from the length estimate for ($K_\mcF+\D+\M+\hat{A})$-negative extremal rays that we can choose $l_0 \in \mbN$ (depending on $a_i$ and the Cartier indices of the $L_i$) such that for all $l \geq l_0$, $\phi$ is $L$-trivial. Since $(X,B,\M+\hat{A}+lL)$ is gklt, so is $(Y, B_Y,\M+\hat{A})$. Next, observe that
\begin{center}
$(l+1-el)L=K+\hat{A}+(l-el)L = e(K_X+B+\M_X+\hat{A})+(1-e)(K_\mcF+\Delta+\M_X+\hat{A}+lL)$.
\end{center} 
On $Y$, this gives 
\begin{center} 
$K_Y+B_Y+\M_Y+\hat{A}_Y+\frac{1-e}{e}(K_{\mcF_Y}+\Delta_Y+\M_Y+\hat{A}_Y+lL_Y)= \frac{l+1-el}{e}L_Y$.
\end{center}
 By \cite[Lemma 4.4]{BZ}, the gklt pair $(Y,  B_Y,\M+\hat{A}+\frac{1-e}{e}(K_\mcF +\Delta+\M_X+\hat{A}+lL))$ has a good minimal model. This implies that $L_Y$ is semiample. Since $\phi$ is $L$-trivial, $L$ is also semiample.\\

 Thus we have proved that if $L$ is NQC and big, then it is semiample. Now we show that $L$ is always NQC to conclude the big case. First, replacing the lc gfq $(X,\mcF, \D,\M)$ with $(X,\mcF, \D,\M+\frac{1}{2}A)$, the number of $(K_\mcF+\D+\M_X)$-negative extremal rays is finite; say they are $R_1 , \cdots, R_k$. Let $F:=L^{\perp} \cap \overline{NE}(X/U)$ and note that $L$-trivial extremal rays are $(K_\mcF+\D+\M_X)$-negative. Thus $F$ is spanned by a subset of $R_1, \cdots, R_k$, say $R_1, \cdots, R_m$ with $m \leq k$. Let $V$ be the smallest affine subspace of $WDiv_\mbR(X)$ defined over $\mbQ$ containing $L$. Let $\mathcal{C}:= R_1^{\perp} \cap \cdots \cap R_m^{\perp} \cap V$; it is a rational polyhedron. Then we claim that $L$ can be written as $L=\sum r_iD_i$, where $r_i$ are positive real numbers and $D_i \in \mathcal{C}$ are nef $\mbQ$-Cartier $\mbQ$-divisors over $U$. Indeed, since $(L \cdot R_j)>0$ for $m+1 \leq j \leq k$, choosing $D_i \in \mathcal{C}$ rational and sufficiently close to $L$, $(D_i \cdot R_j)>0$ for $m+1 \leq j \leq k$ and also $D_i-(K_{\mathcal{F}}+\Delta+\M_X)$ is ample over $U$. Thus, if $R \subset \overline{NE}(X/U)$ is such that $(K_\mcF +\D+\M_X)\cdot R \geq 0$, then $(D_i \cdot R) >0$. Finally, $(D_i \cdot R_i)=0$ for $1 \leq j \leq m$. This shows that the $D_i$ are nef over $U$. 
\vspace{5mm}
\item Suppose $K_\mcF+\Delta+\M_X+A$ is not big over $U$. Then $K_\mcF+\Delta+\M_X$ is not pseudoeffective over $U$. Run a $(K_\mcF+\Delta+\M_X)$-MMP over $U$ with scaling of $A$, say $X \dashrightarrow X_1 \dashrightarrow \cdots X_i \dashrightarrow \cdots X'$. Then $X'$ admits a $(K_{\mcF'}+\Delta'+\M_{X'})$-Mori fiber space structure over $U$, say $X' \xrightarrow{f}S$ and $\mcF'$ descends to a foliation on $S$. Assume $\dim S=2$. Let $\lambda_i:=\rm{inf}\{t|K_{\mcF_i}+\D_i+\M_{X_i}+tA_i$ is nef$\}$. If $\lambda_i<1$ for some $i$, choose $i$ to be the smallest index for which this happens; note that the MMP $X \dashrightarrow X_i$ is $(K_\mcF+\Delta+\M+A)$-trivial. Then $K_{\mcF_i}+\Delta_i+\M_{X_i}+A_i$ is nef and big over $U$, which forces $K_\mcF+\Delta+\M_X+A$ to be the same, a contradiction. We conclude $\lambda_i=1$ for all $i$, hence the entire MMP is $(K_\mcF+\Delta+\M+A)$-trivial. In particular $K_{\mcF'}+\Delta'+\M_{X'}+A_{X'} \sim _{\mbR, S}0$ (here we consider $(X,\mcF,\D, \M+A)$ as an lc gfq). We now claim that there exists a NQC b-nef divisor $\M'_{X'}$ and an ample $\mbR$-divisor $H'$ such that $\M_{X'}+A_{X'} \sim_\mbR \M'_{X'}+H'$. For this, we can argue as follows: let $\phi_1: X \dashrightarrow X_1$ be the first step of the MMP. Let $H_1$ be an ample over $U$ $\mbR$-divisor on $X_1$ and $H_X:=\phi_{1*}^{-1}H_{1}$. We can choose $\epsilon >0$ such that $A- \epsilon H_X \sim _\mbR \Theta$ is ample over $U$. Then, since $\phi_1$ is $(K_\mcF+\Delta+\M_X+A) \sim_\mbR (K_\mcF+\Delta+\M_X+\Theta_{X_1}+\epsilon H_X )$-trivial, $(X_1, \mcF_1, \Delta_1, \M+\Theta +\epsilon H_1)$ is lc gfq. Now note that $K_{\mcF_1}+\Delta_1+\M_{X_1}+A_{X_1} \sim_{\mbR,U} K_{\mcF_1}+\Delta_1+\M_{X_1}+\Theta_{X_1}+\epsilon H_1$. This shows that our claim holds for $X_1$ and by repeating this argument, we get the claim for $X'$. Let $\mcG$ denote the induced foliation on $S$. By \cite[Theorem $2.3.2$]{CHLX}, there exists an lc gfq structure $(S,\mcG, \D_S,\N)$ such that $K_{\mcF'}+\D'+ \M'_{X'}+H' \sim_{\mbR,U} f^*(K_{\mcG}+\D_S+\N_S)$. Let $A_S$ be an ample $\mbR$-divisor on $S$ such that $H'-f^*A_S$ is ample over $U$. Consider the modified canonical bundle formula $K_{\mcF'}+\D'+ \M'_{X'}+H'-f^*A_S \sim_{\mbR,U}K_{\mcG}+\D_S+\N'_S$ where $(S,\mcG, \D_S, \N')$ is lc gfq with NQC b-nef divisor $N'$. Comparing this with the previous canonical bundle formula gives $\N_S \sim_{\mbR,U} \N'_S+A_S$. Thus $K_{\mcG}+\D_S+\N_S \sim_{\mbR,U} K_{\mcG}+\D_S+\N'_S+A_S$. Since $(X,B,\M)$ is gklt, there exists a gklt structure $(S, B_S, M^S)$ on $S$. Let $\mu:S' \to S$ be a small modification which is $\mbQ$-factorial klt. We argue similarly to the threefold case treated above. First, assume $L_S:= K_\mcG+\Delta_S+\N'_S+A_S$ is big over $U$. Let $\hat{A}_S:=A_S+e(K_\mcG+\D_S+\N'_S)-e(K_S+B_S+\M^S_S)$; it is ample if $0<e \ll 1$, $K_\mcG+\D_S+\N'_S+\hat{A}_S$ is big. Letting $K':=(1-e)(K_\mcG+\D_S+\N'_S)+e(K_S+B_S+\M_S^S)$ we have $L_S=K'+\hat{A}_S$. Let $S' \dashrightarrow S''$ be a $(K_{\mcG'}+\D_{S'}+\N'_{S'}+\hat{A}_{S'}+lL_{S'})$-MMP over $U$; note that it exists by \cite[Theorem 2.8]{SS23} and for $l \gg 0$, it is $L_{S'}$-trivial. Then, as in the threefold case, \begin{center}
    $K_{S''}+B_{S''}+\M^S_{S''}+\hat{A}_{S''}+\frac{1-e}{e}(K_{\mcG''}+\Delta_{S''}+\N'_{S''}+\hat{A}_{S''}+lL_{S''})= \frac{l+1-el}{e}L_{S''}$ 
    \end{center} is semiample over $U$ by \cite{BZ} and we are done. We are left to deal with the case $\dim S=1$. Then $\mcF''$ is induced by $X'' \to S$, so we can use \cite[Theorem 1.6]{MMPlcintegrable} to conclude.\\

    Now in case $L_S$ is not big over $U$, we run a $(K_{\mcG'}+\D_{S'}+\N'_{S'})$-MMP over $U$ with scaling of $A_{S'}$; as in the above case, it is $(K_{\mcG'}+\D_{S'}+\N'_{S'}+A_{S'})$-trivial and ends with a Mori fiber space $g:S''\to C$ such that $K_{\mcG''}+\D_{S''}+\N'_{S''}+A_{S''} \sim_{\mbR, C}0$. In this case, $g:S' \to C$ induces $\mcG'$. So we can use \cite[Theorem 2.3.1]{M7'} to conclude.

\end{enumerate}
\end{proof}

\begin{corollary}\label{gmm}
    Let $\pi:X\rightarrow U$ be a projective morphism of normal projective varieties and $\mcF$ be a corank one foliation on the normal projective threefold $X$. Suppose $(X,\mcF, \D, \M)$ is a lc gfq such that $(X,B,\M)$ is gklt for some $0 \leq B \leq \D$. Let $A$ be an $\pi$-ample $\mbR$-divisor on $X$ such that $K_\mcF+\D+\M_X+A$ is pseudoeffective. Then the lc gfq $(\mcF, \D, \M+A)$ has a good minimal model over $U$.

    \begin{proof} We argue along the lines of \cite[Corollary 5.3]{Chaudas}. Let $(X, \mcF,\D, \M+A) \dashrightarrow (X_1, \mcF_1, \D_1, \M+A) \dashrightarrow \cdots$ be a $(K_\mcF+B+\M+A)$-MMP over $U$. We claim the property of the moduli part containing an ample divisor carries through the MMP. Indeed, let $\phi_1:X \dashrightarrow X_1$ be the first step of the MMP. Choose $\pi$-ample $\mbR$-divisor $H_1$ on $X_1$ such that if $H_X:=\phi_{1*}^{-1}H_1$, there exists $C \sim_{\mbR,U} A-H_X$ which is ample. $\phi_1$ is clearly also a $(K_\mcF+\D+\M+\A+\epsilon C)$-MMP if $\epsilon >0$ is sufficiently small. Moreover, $(X,\mcF, \D, \M+\A+\epsilon \C)$ is lc and so is $(X,\mcF, \D, \M+(1-\epsilon)\A+\epsilon \C)$. Clearly, $\phi_1$ is also a $(K_\mcF+\D+\M+(1-\epsilon)\A +\epsilon \C)$-MMP as well. Thus, $(X_1,\mcF_1, \D_1, \M+(1-\epsilon) \A+\epsilon \C)$ is lc. This implies that $(X_1,\mcF_1, \D_1, \M+(1-\epsilon)\A+ \epsilon \C +\epsilon H_1)$ is lc. Now note that 
\begin{center} 
   $ K_{\mcF_1}+\D_1+\M_{X_1}+(1-\epsilon)\A_{X_1}+\epsilon \C_{X_1}+\epsilon H_1 \sim _{\mbQ} K_{\mcF_1}+\D_1+\M_{X_1}+\A_{X_1}= 
    \phi_{1*}(K_\mcF+\D+\M_X+\A)$.
\end{center}
Arguing similarly on subsequent steps proves the claim. Let $\phi: X \dashrightarrow X'$ denote the full MMP. Let $(X',\mcF', \D', \N'+H')$ denote the induced lc gfq with $H'$ ample over $U$. Then $K_{\mcF'}+\D'+\N'+H'$ is semiample over $U$ by Theorem \ref{bpft}.
    \end{proof}
\end{corollary}

\section{Flop connections between minimal models}
As another application of the foliated log canonical MMP, in this section, we show how to connect any two minimal models of an lc foliated triple $(X,\mcF, \Delta)$ by a sequence of flops. In case $\Delta=0$, $\mcF$ has F-dlt singularities and $K_\mcF$ is big, this was proved in \cite{VJ}. We start this section by providing an example of non-isomorphic $\mbQ$-factorial foliated minimal models which are isomorphic in codimension one.
\begin{example}
    In the notation of example \ref{exflip}, consider the foliated triple $(X(\D_1),\mcF_1,\Sigma_1)$, where $\Sigma_1=D_{v_2}+D_{v_4}$. Since $D_{v_2}$ and $D_{v_4}$ are foliation non-invariant parts of the toric boundary, this is a lc foliated triple, with $K_{\mathcal{F}_1}+D_{v_2}+D_{v_4}\sim 0$. Similarly, the strict transformed triple $(X(\D_2),\mcF_2,\Sigma_2)$ is also a lc foliated triple with $K_{\mcF_2}+\Sigma_2\sim 0$ as $\Sigma_2$ is the non-invariant part of the torus boundary. Hence $(X(\D_1),\mcF_1,\Sigma_1)$ and $(X(\D_2),\mcF_2,\Sigma_2)$ are two non-isomorphic minimal models which are isomorphic in codimension one.
\end{example}

\begin{theorem}\label{flop}
    Let $(X, \mcF, \Delta)/U$ be a corank one lc foliated triple on a $\mbQ$-factorial normal projective threefold with $(X, B)$ klt for some $B \leq \Delta$ and $\alpha_i: (X, \mcF, \Delta) \dashrightarrow (X_i, \mcF_i, \Delta_i)$, $i=1,2$ be two minimal models obtained as outcomes of some $(K_\mcF+\Delta)$-MMPs $\alpha_i:X \dashrightarrow X_i$. Then the induced birational map $\alpha: X_1 \dashrightarrow X_2$ can be realized as a sequence of $(K_{\mcF_1}+\Delta_1)$-flops.
\end{theorem}
\begin{proof} We  first show the following:\\

   \emph{Step 1:} There exists no log canonical center $W \subset X_1$ of $(X_1, \mcF_1, \Delta_1)$ contained in the exceptional locus $\Ex \alpha$. \\

   If not, we can choose a common birational model $\Tilde{X}$ of $X$, $X_1$ and $X_2$ such that there exists a prime divisor $E \subset \Tilde{X}$ with center $W$ on $X_1$ and $a(E, X_1, \mcF_1, \Delta_1) = -\epsilon(E)$. Let $p: \Tilde{X} \to X_1$, $q: \Tilde{X} \to X$ and $\Tilde{X} \to X_2$ denote the induced morphisms. Then $E_X:=c_X(E)$ is a lc center of $(X, \mcF, \Delta)$ (since by application of negativity lemma, an MMP can't create new lc centers) and so is $c_{X_2}(E)$ (since $p^*(K_{\mcF_1}+\Delta_1)=  q^*(K_{\mcF_2}+\Delta_2)$ as can be checked by two applications of the negativity lemma). Now note that either $\alpha_1$ or $\alpha_2$ can't be an isomorphism at the generic point $ \eta_W$ of $W$ (because otherwise, $\alpha$ would be an isomorphism at $\eta_W$). But then, we can apply \cite[Lemma 3.38]{KM98} to conclude that either $W$ or $c_{X_2}(E)$ can't be an lc center. Either possibility leads to a contradiction.\\

   \emph{Step 2:} $\alpha$ is an isomorphism in codimension one.\\

   For $i=1,2$, a prime divisor $D\subset X$ is contracted by $\alpha_i$ iff  $a(D,X, \mcF, \D) <a(D,X_i, \mcF_i, \D_i)$. Since $a(D,X_1, \mcF_1, \D_1)=a(D,X_2, \mcF_2, \D_2)$ by two applications of negativity lemma, $D$ is $\alpha_1$-exceptional iff it is $\alpha_2$-exceptional. Thus $\alpha$ is an isomorphism in codimension one.\\

   \emph{Step 3:} We can run a carefully chosen MMP which is $(K_{\mcF_1}+\Delta_1)$-trivial to go from $X_1$ to $X_2$.\\

   In this step, we let $p:\Tilde{X} \to X_1$, $q:\Tilde{X}\to X_2$ denote the normalization of the closure of the graph of $\alpha:X_1\dashrightarrow X_2$. Let $A_2$ be an ample divisor on $X_2$, $A$ and $A_1$ denote its strict transforms on $X$ and $X_1$ respectively. We can consider $(X_2,\mcF_2, \Delta_2, \A)$ and $(X_1,\mcF_1, \Delta_1, \A)$ as gfqs (here the trace of $A_2$ on various birational models of $X_2$ is given by  Cartier closure) and note that the former is an lc gfq. Also note that if $\pi: \hat{X} \to X_2$ is a higher model, then by choosing $A_2$ general in its linear system, we may assume that $\pi^*A_2=\pi_*^{-1}A_2$). We now proceed to show using Step $1$ that after possibly rescaling $\A$, $(X_1, \mcF_1, \Delta_1, \A)$ is lc. Note that $p^*(A_1)=q^*(A_2)+E$ for some $E \geq 0$ that is exceptional over $X_1$ and $X_2$, in particular, Supp $p(E)\subset \Ex \alpha$. Then $p^*(K_{\mcF_1}+\Delta_1+A_1)=q^*(K_{\mcF_2}+\Delta_2+A_2)+E$. Thus if $\epsilon>0$ is small enough, by Step $1$, $(X_1,\mcF_1, \Delta_1,\epsilon \A)$ is lc. Replacing $\epsilon$ with something still smaller, we can ensure that $\alpha_i$ are both negative with respect to $K_\mcF+\Delta+ \epsilon A$. From now on, we will replace $\epsilon A_i$ with $A_i$ for $i=1,2$. \\

   We may run a $(K_{\mcF_1}+\Delta_1+A_1)$-MMP over $U$, say $(X_1, \mcF_1, \Delta_1,\A) \dashrightarrow \cdots \dashrightarrow (X', \mcF', \Delta',\A)$. Note that we have the termination of any such MMP by corollary \ref{glcterm}. Since any two minimal models of $(X, \mcF, \Delta, \A)$ are crepant, it follows that $K_{\mcF'}+\Delta'+\A'$ is semiample and the induced birational map $X' \dashrightarrow X_2$ is its semiample fibration. If this birational morphism is not an isomorphism, its exceptional locus is divisor (because of $\mbQ$-factoriality), but this is impossible since $X_1$ and $X_2$ 
   are isomorphic in codimension one and $(X', \mcF', \Delta',\A)$, being a minimal model of $(X_1, \mcF_1, \Delta_1,\A)$, can't contain any extra divisors. In other words, any $(K_{\mcF_1}+\Delta_1+\A_1)$-MMP gets us from $X_1$ to $X_2$. Thus from now on, our concern will be running this MMP in a $(K_{\mcF_1}+\Delta_1)$-trivial manner. \\

   \emph{Claim:} There exists $t \in (0, 1]$ such that the $(K_{\mcF_1}+\Delta_1+t\A_1)$-MMP is $(K_{\mcF_1}+\Delta_1)$-trivial. \\

   First, it is easy to check that the arguments of \cite[Proposition 3.2(3)]{Bir09} (see also \cite[Theorem 1.12]{MMPlcintegrable}) work in our setting as well. Thus, $\mathcal{N}:=\{\Theta \geq 0| (X_1,\mcF_1, \Theta)$ is lc and $(K_{\mcF_1}+\Theta) \cdot R \geq 0$ for all extremal rays $R \subset \overline{NE}(X/U)\}$ is a rational polytope containing $\Delta_1$ and we can write $K_{\mcF_1}+\Delta_1 = \sum_{i=1}^n a_i L_i$, where $a_i > 0$ for all $i$, $\sum_{i=1}^na_i=1$ and the $L_i$ are nef $\mbQ$-Cartier $\mbQ$-divisors which are of the form $K_{\mcF_1}+\Theta_i$ where $(X_1,\mcF_1, \Theta_i)$ is lc for all $i$.\\
   
   Choose $k \in \mathbb{N}$ such that $kL_i$ is Cartier for all $i$. Let $a:= \rm{min}\{a_1, \cdots, a_n\}$ and $e:= \frac{\frac{a}{k}}{6+\frac{a}{k}}$.
   As observed above, we may assume $K_{\mcF_1}+\Delta_1+e \A_1$ is not nef. Let $R \subset \overline{NE}(X_1/U)$ be a negative extremal ray with respect to it. Then $R$ is also $(K_{\mcF_1}+\Delta_1+\A_1)$-negative. Thus by Theorem \ref{glccone}, $R = \mbR_+[C]$ for some rational curve $C\subset X_1$ tangent to $\mcF_1$ with $0>(K_{\mcF_1}+\Delta_1+\A_1)\cdot C \geq -6$. Suppose that $(K_{\mcF_1}+\Delta_1) \cdot C >0$ and that $L_i \cdot C >0 $ for $i=1, \cdots ,m$ and $L_i \cdot C =0$ for $i=m+1, \cdots, n$. Since if $L_i \cdot C>0$, $L_i \cdot C \geq \frac{1}{k}$, this gives $(K_{\mcF_1}+\Delta_1) \cdot C \geq \frac{a_1+ \cdots a_m}{k} \geq \frac{a}{k}$. With this, we have
   \begin{center}
       $(K_{\mcF_1}+\Delta_1+e\A_1) \cdot C = e(K_{\mcF_1}+\Delta_1+\A_1) \cdot C+$ \newline $ (1-e)(K_{\mcF_1}+\Delta_1) \cdot C \geq -6e +(1-e)\frac{a}{k}= 0$
   \end{center}
   which is a contradiction. We conclude $(K_{\mcF_1}+\Delta_1) \cdot C =0$. Taking $t:=e$ gives our claim. Indeed, since the first step of the MMP is $(K_{\mcF_1}+\Delta_1)$-trivial (hence also $L_i$-trivial for all $i$) the Cartier index of $L_i$ stays the same by Theorem \ref{CT}. In particular, the same arguments also apply to subsequent steps. This finishes the proof of the theorem.
\end{proof}

\section{Log geography of minimal models}

In this section, we show that the number of minimal models of a boundary polarized lc gfq is finite. This can be seen as a complement to Theorem \ref{flop} above. More generally, we have the following result in the spirit of \cite{BCHM}. In case the foliation has non-dicritical singularities, this was obtained in \cite[Theorem 3.2]{Rok}. We first define the objects that will be required to state the theorem. They are analogous to those in \cite{BCHM}. 
\begin{definition}\cite[Definition $1.1.4$]{BCHM}\label{polytope}
    Let $X$ be a normal projective variety equipped with a projective morphism $\pi: X \to U$.  Let $\M= \sum_{i=1}^ka_iM_i$ be a NQC b-divisor on $X$ where $a_i$ are positive real numbers and $\M_i$ are $\mbQ$ b-nef divisors over $U$. Let $Span_{\mbR}(\mathcal{M}):=\lbrace\sum_{i=1}^{k}a_i\M_i|a_i\in \mbR\rbrace$. $V$ be a finite-dimensional affine subspace of $\text{WDiv}_{\mbR}(X)\times Span_{\mbR}(\mathcal{M})$. Fix a $\pi$-ample $\mbR$-divisor $A$ on $X$. Let $\mathcal{F}$ be a foliation on $X$. Then we define
    \begin{enumerate}
        \item $V_{A}=\lbrace (B,\M+A)| (B,\M)\in V\rbrace$
        \item $\mathcal{L}_{\pi,A}(V)=\lbrace (B,\M+A)\in V_{A}|(X,\mcF, B, \M+A) \text{ is a lc gfq over $U$ and } B\geqslant 0\rbrace$ 
        \item $\mathcal{E}_{\pi,A}(V)=\lbrace (B,\M+A)\in \mathcal{L}_{\pi,A}(V)|K_{\mathcal{F}}+\Delta+\M_X+A\text{ is pseudo-effective over $U$ }\rbrace$
        
        \item Given a birational contraction $\phi:X\dashrightarrow Y$ over $U$, define \[ 
        \mathcal{L}_{\pi,\phi,A}(V)=\lbrace (B,\M+A)\in \mathcal{E}_{\pi,A}(V)| \phi \text{ is a minimal model for $K_\mcF+\Delta+\M$ over $U$}\rbrace\]
        \item And finally given a rational map $\psi:X\dashrightarrow Z$ over $U$, we define
        \[
        \mathcal{A}_{\pi,\psi,A}(V)=\lbrace (B,\M+A)\in \mathcal{E}_{\pi,A}(V)| \psi \text{ is the ample model for $K_{\mathcal{F}}+\Delta +\M$ over $U$}\rbrace
        \]
        
    \end{enumerate}

 \end{definition} 

\begin{theorem}\label{LGLCM}
    Let $X$ be a normal projective threefold equipped with a corank one foliation $\mcF$ and a NQC nef over $U$ b-divisor $\M$. With notation as in Definition \ref{polytope},let $V$ be a finite-dimensional affine subspace of $\text{WDiv}_{\mbR}(X)\times Span_{\mbR}(\mathcal{M})$ defined over the rationals. Let $A$ be a general ample $\mbR$-divisor.
    \begin{enumerate}
        \item There are finitely many birational contractions $\phi_i:X\dashrightarrow Y_i$ over $U$,
        $1\leqslant i\leqslant p$ such that \[
        \mathcal{E}_{\pi,A}(V)=\cup_{i=1}^{p} \mathcal{L}_i
        \]
        where each $\mathcal{L}_i=\mathcal{L}_{\pi,\phi_i,A}(V)$ is a rational polytope. Moreover, if $\phi:X \dashrightarrow Y$ is a minimal model of $(X,\mathcal{F},\Delta,\M)$ over $U$, for some $(\Delta,\M)\in \mathcal{E}_{\pi,A}(V)$, then $\phi=\phi_i$ for some $1\leqslant i\leqslant p$
        \item There are finitely many rational maps $\psi_j:X\dashrightarrow Z_j$, $1\leqslant j\leqslant q$  which partition $\mathcal{E}_{\pi,A}(V)$ into subsets $\mathcal{A}_j=\mathcal{A}_{\pi,\psi_j,A}(V)$
        \item For every $1\leqslant i\leqslant p$ there is a $1\leqslant j\leqslant q$ and a morphism $f_{i,j}:Y_i\rightarrow Z_j$ such that $\mathcal{L}_i\subset \overline{\mathcal{A}}_j$.
    \end{enumerate}
    In particular $\mathcal{E}_{\pi,A}(V)$ and each $\overline{\mathcal{A}}_j$ are rational polytopes.
\end{theorem}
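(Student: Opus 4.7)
The plan is to adapt the log geography of log canonical models from \cite[Theorem 1.1.5]{BCHM} to the foliated setting, using the good minimal model theorem (Corollary \ref{gmm}), termination of flips (Corollary \ref{glcterm}), the $\mbR$-basepoint free theorem (Theorem \ref{Rbpft}), and the foliated cone theorem (Theorem \ref{glccone}) established above.

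First, I would show that $\mathcal{L}_{A,\M}(V)$ is a rational polytope: F-glc is cut out by finitely many linear inequalities $\text{mult}_E B_Y \leq \epsilon(E)$ evaluated on a single foliated log resolution dominating all elements of $V$. For every $\Delta + \M \in \mathcal{E}_{A,\M}(V)$, Corollary \ref{gmm} yields a good minimal model $\phi: X \dashrightarrow Y$ of $(\mcF, B, \M + A)$, and Theorem \ref{Rbpft} supplies its semiample fibration $f: Y \to Z$, which is the associated ample model. Hence every point of $\mathcal{E}_{A,\M}(V)$ lies in some $\mathcal{L}_{\phi, A, \M}(V)$ and in some ample model chamber.

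The main step is to show that $\mathcal{L}_\phi := \mathcal{L}_{\phi, A, \M}(V)$ is a rational polytope for each such birational contraction $\phi: X \dashrightarrow Y$. Near an interior point $\Delta_0 + \M \in \mathcal{L}_\phi$, membership reduces to three conditions: $(\mcF_Y, \phi_* B, \M + A)$ is F-glc on $Y$; every $\phi$-exceptional divisor $E$ satisfies $a(E, \mcF, \Delta, \M) < a(E, \mcF_Y, \phi_* \Delta, \M)$; and $\phi_*(K_\mcF + \Delta + \M_X)$ is nef on $Y$. The first two conditions are linear in the coefficients of $B$, once one fixes a foliated log resolution dominating both $X$ and $Y$. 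For the nefness condition, I would fix an ample $\mbQ$-divisor $H_Y$ on $Y$ and invoke clause (3) of Theorem \ref{glccone}: for small $\eta > 0$, only finitely many extremal rays of $\overline{NE}(Y)$ are negative against $\phi_*(K_\mcF + \Delta_0 + \M_X) - \eta H_Y$. Non-negativity on this finite collection then imposes finitely many linear inequalities, giving a local rational polytope description of $\mathcal{L}_\phi$.

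Finiteness of $\{\phi_i\}$ follows from a standard MMP-with-scaling argument: along any segment inside $\mathcal{L}_{A,\M}(V)$, running a $K_\mcF + B + \M + A$-MMP with scaling, termination (Corollary \ref{glcterm}) combined with Theorem \ref{Rbpft} produces only finitely many distinct minimal models. Applied to the compact rational polytope $\mathcal{L}_{A,\M}(V)$, this yields a finite cover of $\mathcal{E}_{A,\M}(V)$ by rational polytopes $\mathcal{L}_i$. For parts (2) and (3), each $Y_i$ comes equipped with a semiample fibration $f_i: Y_i \to Z_i$ via Theorem \ref{Rbpft}; two indices $i, i'$ yield the same ample model precisely when the induced rational quotients $X \dashrightarrow Z_i$ and $X \dashrightarrow Z_{i'}$ coincide, grouping the $\mathcal{L}_i$'s into the chambers $\mathcal{A}_j$ with $f_{i,j} = f_i$ realizing $\mathcal{L}_i \subset \overline{\mathcal{A}_j}$. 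The main obstacle I expect is the rational polyhedral description of the nef chamber; in the classical setting this rests on the $\mbR$-basepoint free theorem and a careful extremal ray analysis, and here the crucial input is the finiteness clause of Theorem \ref{glccone} combined with Theorem \ref{Rbpft}, which together provide exactly the data required for the Shokurov-type polytope argument.
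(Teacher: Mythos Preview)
Your approach differs from the paper's in a substantive way. The paper proceeds by induction on the dimension of $\mathcal{E}_{A,\M}(V)$: it first treats the case where some $\Delta_0+\M$ satisfies $K_{\mcF}+\Delta_0+\M\sim_{\mbR}0$ by projecting from $\Delta_0$ to the boundary of the polytope (which has lower dimension); in the general case it fixes $\Delta_0$, passes to a minimal model $Y$ (so one may assume $K_{\mcF}+\Delta_0+\M$ is nef), then to the ample model $\psi:X\to Z$ via Theorem~\ref{bpft}, so that $K_{\mcF}+\Delta_0+\M\sim_{\mbR,Z}0$ and the inductive hypothesis applies relatively over $Z$; a short argument then upgrades relative minimal models to global ones in a neighborhood of $\Delta_0$. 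You instead describe each chamber $\mathcal{L}_\phi$ directly on $Y$ via the cone theorem and assemble finiteness from compactness. Both strategies appear in the classical literature; the paper's is the BCHM-style induction, yours is the Shokurov-polytope route.

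Your finiteness step, however, has a genuine gap. Running an MMP with scaling along each segment and invoking termination produces finitely many models $\phi_1,\dots,\phi_p$ whose chambers cover $\mathcal{E}_{A,\M}(V)$, but this does not establish the ``Moreover'' clause of part~(1): that \emph{every} minimal model of any $\Delta+\M\in\mathcal{E}_{A,\M}(V)$ coincides with some $\phi_i$. A fixed $(\mcF,\Delta,\M)$ could a priori admit minimal models not reached by the particular MMPs you ran, and ``finitely many along each segment'' together with compactness says nothing about these. The paper's inductive reduction handles this cleanly: after replacing $X$ by one minimal model and passing to the ample model $Z$, \emph{any} minimal model of a nearby $\Delta$ is automatically a minimal model over $Z$, and the relative problem lives over a strictly lower-dimensional polytope where the inductive hypothesis bounds \emph{all} minimal models. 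To complete your route you would need an additional ingredient---for instance, an argument that every minimal model of $(\mcF,\Delta,\M)$ is the output of some $K_{\mcF}+\Delta+\M$-MMP from $X$, or a combination of your chamber description with Theorem~\ref{flop} to control all minimal models at a given point.
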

\begin{proof}
    First of all, note that $\mathcal{E}_{\pi,A}(V)$ is a rational polytope and for $(\Delta ,\M)\in \mathcal{E}_{A,M}(V)$ we know the existence of a minimal model.
    We will use induction on dimension of $\mathcal{E}_{\pi,A}(V)$. For the base case, dimension of $\mathcal{E}_{\pi,A}(V)$ is $0$ i.e. it is a point $(\Delta ,\M)$. Then we already know the the existence of minimal model and by basepoint free theorem we know the existence of log canonical model.\\
    
    So we assume dimension of $\mathcal{E}_{\pi,A}(V)$ is strictly positive. First asssume there is a $(\Delta_0 ,\M_0)\in \mathcal{E}_{\pi,A}(V)$ such that $K_{\mathcal{F}}+\Delta_0 +\M\sim_{\mbR}0$. Pick $(\Theta ,\M)\in\mathcal{E}_{\pi,A}(V)$ different from $(\Delta_0 ,\M_0)$, then there is a $(\Delta ,\M)$ in the boundary of $\mathcal{E}_{\pi, A}(V)$ such that $(\Theta ,\M)=\lambda (\Delta ,\M)+(1-\lambda)(\Delta_0 ,\M_0)$ for some $0<\lambda\leqslant 1$. Hence $K_{\mathcal{F}}+\Theta +\M_X\sim_{\mbR}\lambda(K_{\mathcal{F}}+\Delta +\M) $ and they have the same minimal model. On the other hand, we know that there are only finitely many minimal models for all $(\Delta ,\M)$ contained in the boundary of $\mathcal{E}_{\pi,A}(V)$ by the induction hypothesis. So we are done in this case.\\
    
    Now we come to the general case. Since $\mathcal{E}_{\pi,A}(V)$ is compact, it is sufficient to prove the finiteness of minimal models in a neighbourhood of a fixed $(\Delta_0, \M_0) \in \mathcal{E}_{\pi, A}(V)$. Let $\phi:X\dashrightarrow Y$ be a minimal model of $K_{\mathcal{F}}+\Delta_0+\M_0$ with $\pi': Y \to U$ the induced contraction. We choose a neighbourhood $\mathcal{C}_0$ of $(\Delta_0, \M_0)$ such that $\phi$ is also $(K_{\mathcal{F}}+\Delta+M)$-nonpositive for all $(\Delta, \M)\in \mathcal{C}_0$. Then $\phi_*(\mathcal{C}_0)$ is contained in $\mathcal{E}_{\pi',\phi_*(A)}(\phi_*(V))$. As in the proof of Corollary \ref{gmm} we can find ample divisor $A'$ in $Y$ such that $K_{\mathcal{F}_Y}+\Delta_Y+\M\sim_{\mbR}K_{\mathcal{F}_Y}+\Delta'_Y+\M'$ for all $(\Delta_Y,\M)\in\phi_*(\mathcal{C}_0) $, where $(\Delta'_Y,\M')\in \mathcal{E}_{A',\M'}(W)$, and $W\subset WDiv_{\mbR}(Y)$ is finite dimensional. As $\phi$ is $(K_{\mathcal{F}}+\Delta+\M)$-nonpositive for all $(\Delta,\M)\in \mathcal{C}_0$ we know that any minimal model for $K_{\mathcal{F}}+\Delta+\M$ is also a minimal model of $K_{\mathcal{F}_Y}+\Delta'_Y+\M'$. Hence replacing $X$ by $Y$ and $\mathcal{C}_0$ by the corresponding polytope in $Y$, we can assume $K_{\mathcal{F}}+\Delta_0+\M_0$ is nef, and Theorem \ref{bpft} implies it is semiample. Consider the corresponding ample model $\psi:X\rightarrow Z$, then $K_{\mathcal{F}}+\Delta_0+\M_0\sim_{\mbR,Z} 0$. By the arguments of the above paragraph, we know that for all $(\Delta,\M)\in \mathcal{C}_0$ there exist finitely many minimal models $\phi_i:X\dashrightarrow Y_i$ of $K_\mcF+\Delta+\M$ over $Z$. We claim that we can find a neighbourhood $P_0$ of $(\Delta_0,\M_0)$ such that for all $(\Delta,\M)\in P_0$, if $\phi_i:X\dashrightarrow Y_i$ is a minimal model for $K_{\mathcal{F}}+\Delta+\M$ over $Z$, then it is also a minimal model globally. Indeed, if $(K_{\mcF_{Y_i}}+\Delta_{Y_i}+\M) \cdot R<0$ for some extremal ray $R$ spanned by a curve which is horizontal over $Z$, then $(K_{\mcF_{Y_i}}+\Delta_{0,Y_i}+\M_0) \cdot R >0$, since $K_{\mcF_{Y_i}}+\Delta_{0,Y_i}+\M_0$ is the pullback of an ample divisor on $Z$. Thus, if $(\Delta,\M)$ is sufficiently close to $(\Delta_0,\M_0)$, $(K_{\mcF_{Y_i}}+\Delta_{Y_i}+\M_X) \cdot R >0$ as well, a contradiction. Hence there are only finitely many minimal models for all $(\Delta,\M)\in P_0$, and by the compactness of $\mathcal{E}_{\pi,A}(V)$ we are done. These finitely many minimal models give rise to the finite partition of $\mathcal{E}_{\pi,A}(V)$, which proves $(1)$.\\
    
    For $(2)$, note that by Theorem \ref{bpft} we have the existence of a log canonical model. And since the log canonical model (or ample model) is unique, hence from the finiteness of minimal models, it follows that there can be only finitely many log canonical models $\psi_j:X\dashrightarrow Z_j$ for all $(\Delta,\M)\in \mathcal{E}_{\pi,A}(V)$.\\
    
    Finally for $(3)$, if $\mathcal{L}_i\subset \overline{\mathcal{A}_j}$ then the map $f_{i,j}:Y_i\rightarrow Z_j$ is the corresponding semiample fibration for some $(\Delta,\M)$ inside the boundary of $\overline{\mathcal{A}_j}$ and $\mathcal{L}_i$.
    
\end{proof}
\section{Appendix: Towards a general contraction theorem}
In this paper, we have developed the minimal model program for lc foliated triples on threefolds of klt type assuming that the klt boundary is at most the foliated boundary. In case we make no assumptions on the klt boundary, we show below that negative extremal rays can still be contracted at the level of algebraic spaces.\footnote{More recent developments indicate the arguments used in \cite[Theorem 2.1.3]{M7'} can be used to show the contraction exists at the level of varieties.}

\begin{theorem}\label{nqfCT} Let $(X,\mcF, \Delta)/U$ be a projective rank two foliated lc triple with a projective morphism $\pi: X \to U$, where $X$ is a threefold of klt type. Let $R \subset\overline{NE}(X/U)$ be a $(K_\mcF+ \Delta)$-negative exposed extremal ray. Then there exists a projective contraction $\phi: X \to \mathcal{Y}$ to an algebraic space $\mathcal{Y}$ such that $\phi(C)= pt$ iff $[C] \in R$. 
\begin{proof} We deal with the case $U=\rm{pt}$. As in Theorem \ref{CT}, similar arguments work in the general case also by replacing the absolute MMPs with relative ones.
First suppose loc $R \neq X$.
Let $\hat{\pi}:\hat{X} \to X$ be a small $\mbQ$-factorialization of $X$ and $\pi: \overline{X}\xrightarrow{\overline{\pi}}\hat{X} \xrightarrow{\hat{\pi}}X$ a $\mbQ$-factorial F-dlt modification of $(X, \mcF, \Delta)$. Let $H_R=K_\mcF+\Delta+A$ be a supporting Cartier divisor of $R$, where $A$ is ample, $\overline{H_R}:=\pi^*H_R$, $\overline{A}:=\pi^*A$. Let $\overline{\mcF}$ be the induced foliation on $\overline{X}$ and $\overline{\Delta}$ be defined by $K_{\overline{\mcF}}+\overline{\Delta}=\pi^*(K_\mcF +\Delta)$.\\

     Suppose loc $R=D$ is a divisor (possibly with several components) and $\overline{D}:=\pi_*^{-1}D$. The idea is to run (some steps of) a carefully chosen $(K_{\overline{\mcF}}+\overline{\Delta})$-MMP to contract $\overline{D}$ to a union of curves, then run a usual MMP to contract $\Ex \pi$ and finally argue as in the proof of \cite[Lemma 8.12]{Spi} to get the desired contraction to an algebraic space.\\

     Let $\overline{X} \dashrightarrow \overline{X_1}\dashrightarrow \cdots\overline{X_i}\cdots$ be steps of a $(K_{\overline{\mcF}}+\overline{\Delta})$-MMP with scaling of $\overline{A}$ and $\lambda_i:= \rm{inf} \{t: K_{\overline{\mcF_i}}+\overline{\Delta_i}+t\overline{A_i}$ is nef$\}$. Then by termination of $K_{\overline{\mcF}}+\overline{\Delta}$ flips, there exists $i_0$ such that $\lambda_{i_0}<1$; choose $i_0$ to be the smallest such index, let $\overline{X}':=\overline{X}_{i_0}$ and $\phi: \overline{X}\dashrightarrow \overline{X}'$ denote the induced birational map. Then $\phi$ is a full $(K_{\overline{\mcF}}+\overline{\Delta}+\lambda_{i_0}\overline{A})$-MMP all the steps of which are $\overline{H_R}$-trivial. Note also that $K_{\overline{\mcF}'}+\overline{\Delta}'+t \overline{A}'$ is nef for all $\lambda_{i_0} \leq t \leq 1$. Since $\overline{D}\subset B_-(K_{\overline{\mcF}}+\overline{\Delta}+\lambda_{i_0}\overline{A})$, $\phi$ contracts all components of $\overline{D}$ by \cite[Lemma 2.1]{CS3}.\\

     Since $\hat{X}$ is $\mbQ$-factorial klt, if $\hat{\Delta}:= \hat{\pi}_*^{-1}\Delta$, then $(\hat{X}, \epsilon \hat{\Delta})$ is klt for $\epsilon>0$ small enough. Write $\overline{\pi}^*(K_{\hat{X}}+\epsilon\hat{\Delta})+E_0=K_{\overline{X}}+\epsilon\overline{\Delta}+F_0$, where $E_0, F_0 \geq 0$ are $\overline{\pi}$-exceptional divisors without common components. Let $B\geq 0$ be $\overline{\pi}$-exceptional such that $-B$ is $\overline{\pi}$-ample. We can choose $\delta>0$ such that if $E:= E_0+\delta B$, $F:=F_0+\delta B$, then $(\overline{X}, \epsilon \overline{\Delta}+F)$ is klt (note that $(\overline{X}, \epsilon \overline{\Delta})$ is klt by \cite[Lemma 3.16]{CS}). \\

     Let $\overline{K}:=\overline{H_R}-\lambda \overline{A} + s(K_{\overline{X}}+\epsilon \overline{\Delta} +F)$, where we fix $\lambda>0$ small with $\lambda_{i_0}<1-\lambda <1$ and $s>0$ small enough such that $\overline{K}$ is big. Let $\psi: \overline{X}' \dashrightarrow \overline{X}''$ be a $\overline{K}'$-MMP; note that for $s>0$ sufficiently small, \begin{enumerate}
         \item this MMP is $\overline{H_R}'-\lambda \overline{A}'$-trivial, hence also $\overline{H_R}$-trivial by \cite[Lemma 3.1]{CS3}.
         \item $\psi \circ \phi$ is $\overline{K}$-negative.
     \end{enumerate}
     Then $\overline{K}''$ is semiample by the classical basepoint free theorem for klt pairs. Now note that $\overline{K}= \overline{\pi}^*[\hat{H}_R -\lambda \hat{A}+ s(K_{\hat{X}}+\epsilon \hat{\Delta})]+sE$. Thus looking at the $\mbQ$-linear systems on both sides, we conclude that $E \subset B(K)$, where $B(K)$ is the stable base locus of $K$ and thus $\psi \circ \phi$ contracts $E$ to a bunch of points. \\

     We now claim that $B_+(\overline{H}_R'')$ is a finite union of curves. For this, since $\overline{H}_R''$ is nef and big, by Nakamaye's theorem, it suffices to show that $\overline{H}_R''^2 \cdot S >0$ for any surface $S \subset \overline{X}''$ (Note that our arguments here are similar to those in \cite[Lemma 8.20]{Spi}). Let $\nu: S^\nu \to \overline{X}''$ denote the normalization of $S$. For a divisor $D$ on $\overline{X}''$, we abuse notation and denote $\nu^*D$ by $D|_S$.\\

     Note the nummerical dimension $\nu(\overline{H}_R''|_S)$ can not equal $0$, since our MMPs have contracted $\overline{D}$ and $\Ex \pi$ in a $\overline{H_R}$-crepant way, so in particular, $\overline{H}_R''$ can not be numerically trivial on a divisor. \\

     Now let $\nu(\overline{H}_R''|_S)=1$ and $\overline{S}$ denote the strict transform of $S$ on $\overline{X}$. Then ${\overline{H}_R''|_S}^2=0$ and $\overline{A}|_{\overline{S}}\cdot \overline{H}_R|_{\overline{S}}>0$. The $\overline{H}_R$ crepancy of $\psi \circ \phi$ then shows that $(K_{\overline{\mathcal{F}}''}+\overline{\Delta}'')|_S \cdot \overline{H}_R''|_S= - \overline{A}''|_S \cdot \overline{H}_R''|_S<0$. Possibly after rescaling, we can write $\overline{H}_R''=A''+D+S$, where $D \geq 0$ and shares no common components with $S$. Using this, we see that $\overline{H}_R''|_S \cdot S|_S= -\overline{H}_R''|_S \cdot (A''+D)|_S \leq - \overline{H}_R''|_S \cdot A''|_S <0$. We now divide into two cases depending on whether $S$ is invariant  or not.\\

     Suppose $\epsilon(S)=0$. Then there exists $\Theta \geq 0$ such that $(K_{\overline{\mcF}''}+\overline{\Delta}'')|_S \cdot \overline{H_R}''|_S=(K_S +\Theta)\cdot \overline{H}_R''|_S<0 $ by adjunction and the observation in the above paragraph. Thus by bend and break, through a general point of $S$, there passes a $\overline{H_R}''$-trivial rational curve, which is impossible since $\overline{H_R}''$ can not be numerically trivial along a moving family of curves. Now, suppose $\epsilon(S)=1$. We can choose $0 \leq t \leq 1$ such that $\overline{\Delta}''+tS=\Delta'+S$, where $\Delta' \geq 0$ and $S$ have no components in common and by adjunction, there exists $\Theta \geq 0$ such that letting $\overline{\mcF}''_S$ denote the restricted foliation on $S$, we have $(K_{{\overline{\mcF}''}}+\overline{\Delta}''+tS)|_S= K_{\overline{\mcF}''_S}+\Theta$. Finally, by the observations in the above paragraph, we have $(K_{\overline{\mcF}''_S}+\Theta) \cdot \overline{H_R}''|_S <0$. Then by another application of \cite[Corollary 2.28]{Spi}, through a general point of $S$, there passes a $\overline{H_R}''$-trivial rational curve tangent to $\overline{\mcF}''$, again a contradiction. Thus $\nu(\overline{H_R}''|_S)=2$ is the only possibility, thereby proving the claim.\\

     Since $B_+(\overline{H_R}'')$ is a union of curves, we can argue as in the proof of \cite[Lemma 8.21]{Spi} (by observing that this union of curves has negative definite interseection matrix and then using Artin's theorem to contract it) to get the desired morphism $\phi: X \to \mathcal{Y}$ in case loc $R$ is a divisor.\\

     If loc $R$ is a curve, then we can directly apply the arguments of the above paragraph to a small $\mbQ$-factorialization of $X$ to get the desired morphism.\\

     Now suppose loc $R=X$. Let $\pi: \overline{X} \to X$ be a small $\mbQ$-factorialization. Let $\overline{R} \subset \overline{NE}(\overline{X})$ be the extremal ray with $\pi_*\overline{R}=R$. Then $(K_{\overline{F}}+\overline{\Delta})\cdot \overline{R}<0$. Let $\phi:= c_{\overline{R}}:\overline{X} \to S$ be the associated contraction given by Theorem \ref{CT}; note that $\dim S <3$. Let $H_R:= K_\mcF +\Delta +A$ be a supporting Cartier divisor of $R$ (where $A$ is an ample $\mbQ$-divisor. Then $\overline{H}_R:=\pi^*H_R \sim _{\mbR,S}0$; thus $\overline{H}_R \sim_\mbR \phi^*L_S$ for some nef divisor $L_S$. We argue separately depending on whether $L_S$ is big or not.\\
     
     Suppose $L_S$ is big and $\dim S=2$. Let $C^i\subset\overline{X}$ be the curves extracted by $\pi$ and $C^i_S:=\phi(C^i)$. Since $\cup_i C^i_S$ is the null locus of the nef and big divisor $L_S$, it has a negative definite intersection matrix. Thus by Artin's theorem, there exists a birational morphism $\pi: S \to \mathcal{S}'$ to an algebraic space contracting $\cup_i C^i_S$. This induces a morphism $c_R : X \to \mathcal{S}'$.\\

     Suppose $L_S$ is not big. By Theorem \cite[Theorem 2.3.2]{CHLX}, we can write $L_S=K_\mcG+\Delta_S+\N_S$, where $\mcG$ is the descended foliation on $S$ and $(S,\mcG, \Delta_S, \N)$ is an lc gfq with semiample moduli part. In particular, this allows us to contract the null locus of $L_S$ by \cite[Theorem 2.8]{SS23}, thus finishing the proof.

\end{proof}
    
\end{theorem}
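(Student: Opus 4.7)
The plan is to produce the contraction by first passing to auxiliary birational models where we have both $\mbQ$-factoriality and F-dlt singularities, run carefully chosen MMPs that are trivial with respect to a supporting divisor of $R$, and then descend the resulting (nef, big) divisor back to $X$ using Nakamaye's theorem together with Artin's criterion for contracting curves to an algebraic space. We split into cases according to $\dim \text{loc}\, R \in \{1,2,3\}$. Write $H_R = K_\mcF + \Delta + A$ for a nef supporting $\mbQ$-divisor of $R$ (such $A$ ample exists since $R$ is exposed), and let $\pi : \overline{X} \to X$ factor through $\hat\pi:\hat X \to X$ as the composition of a small $\mbQ$-factorialization and a $\mbQ$-factorial F-dlt modification (both of which exist thanks to $X$ klt type and \cite[Theorem 8.1]{CS}). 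Write $\overline H_R = \pi^*H_R$ and set up $(\overline{\mcF}, \overline{\Delta})$ crepantly over $(\mcF, \Delta)$.

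When $\text{loc}\,R$ has dimension $\leq 2$, I would work on $\overline X$: run a $K_{\overline{\mcF}}+\overline{\Delta}$-MMP with scaling of $\overline A$ until the first step where the scaling constant drops strictly below $1$, yielding $\phi : \overline X \dashrightarrow \overline{X}'$. By construction this MMP is $\overline H_R$-trivial (every step is $K_{\overline{\mcF}}+\overline{\Delta}+\lambda \overline A$-negative for some $\lambda<1$), and the strict transform of $\text{loc}\,R$ lies in $\mathbf{B}_-$, hence is contracted by \cite[Lemma 2.1]{CS3}. Next, perturb by a small multiple of a suitable klt-type boundary $E$ coming from the small $\mbQ$-factorialization (using $B\geq 0$ with $-B$ $\overline\pi$-ample and \cite[Lemma 3.16]{CS}), so that a further MMP $\psi : \overline{X}' \dashrightarrow \overline{X}''$ is both $\overline H_R$-trivial and contracts $\Ex \pi$; the classical basepoint free theorem gives that the auxiliary divisor here is semiample. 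The key step is then to show $\mathbf{B}_+(\overline H_R'')$ is a finite union of curves: using Nakamaye's theorem, it suffices to check $\overline H_R''{}^2\cdot S > 0$ for every surface $S\subset \overline X''$. Numerical dimension zero is ruled out by construction, and numerical dimension one is excluded by bend-and-break (via \cite[Corollary 2.28]{Spi}) applied to $(K_{\overline{\mcF}''_S}+\Theta)|_S<0$, considering the cases $\epsilon(S)=0$ (ordinary adjunction) and $\epsilon(S)=1$ (foliated adjunction via Lemma \ref{adj}). Once $\mathbf{B}_+(\overline H_R'')$ is a union of curves, its intersection matrix is negative definite, so Artin's theorem contracts it to an algebraic space, and this descends to the required morphism $c_R:X \to \mathcal{Y}$.

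When $\text{loc}\,R = X$, I would pass to a small $\mbQ$-factorialization $\pi:\overline X \to X$, observe that $R$ lifts to a $(K_{\overline{\mcF}}+\overline{\Delta})$-negative extremal ray $\overline R$ on $\overline X$, and apply Theorem \ref{CT} to get a contraction $f:\overline X \to S$ with $\dim S \leq 2$. Then $\overline H_R \sim_{\mbQ,S} 0$ gives $\overline H_R \sim_\mbQ f^*L_S$ for some nef $L_S$ on $S$. If $L_S$ is big, its null locus is a union of curves on $S$ with negative definite intersection matrix, so I contract it via Artin to get $\mathcal{S}'$, and the composition $X \to \mathcal{S}'$ is the sought morphism. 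If $L_S$ is not big, I would invoke the canonical bundle formula (Lemma \ref{cbf}) to write $L_S \sim_\mbQ K_\mcG + \Delta_S + \N_S$ for an F-glc pair with b-semiample moduli, run a suitable MMP with scaling and use the results of the previous sections (in particular Theorem \ref{bpft}) to deduce semiampleness of $L_S$, producing the contraction.

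The main obstacle I expect is the reduction to the nef-and-big case of $\overline H_R''$ and verifying the numerical-dimension-two conclusion in the divisorial case, because it requires delicate control over adjunction for foliated surfaces inside the MMP outputs and a careful choice of scaling constants so that the two consecutive MMPs remain simultaneously $\overline H_R$-trivial while contracting both $\overline D$ and $\Ex \pi$. A secondary technical point is guaranteeing that the null locus on $\overline X''$ really is contained in the fibers of a well-defined map (so that Artin contraction yields an algebraic space receiving a morphism from $X$ and not just from $\overline X''$); this should follow from $\overline H_R$-triviality of $\psi\circ\phi$ together with the fact that everything we contract sits above $\text{loc}\,R$ or $\Ex \pi$.
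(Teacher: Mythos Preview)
Your proposal is correct and follows essentially the same approach as the paper: pass to a small $\mbQ$-factorialization and F-dlt modification, run an $\overline{H_R}$-trivial foliated MMP with scaling to kill $\overline{D}$, then a second $\overline{H_R}$-trivial klt MMP to kill $\Ex\pi$, apply Nakamaye and a bend-and-break/foliated-adjunction dichotomy to force $\nu(\overline{H_R}''|_S)=2$, and finish with Artin's contraction; in the fiber-type case, descend via Theorem~\ref{CT} and the canonical bundle formula. Two small deviations worth noting: the paper treats the case $\dim\text{loc}\,R=1$ by skipping the first (foliated) MMP and working directly on the small $\mbQ$-factorialization, and in the non-big $L_S$ case it argues by running an MMP on $S$ to a Mori fiber space over a curve rather than invoking Theorem~\ref{bpft} (which is stated for threefolds), so your appeal to \ref{bpft} there should be replaced by this direct surface argument or an explicit induction.
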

\textbf{Acknowledgements}: The authors would like to thank Professor Paolo Cascini, Professor Calum Spicer and Professor Roberto Svaldi for many useful discussions and the referees for their comments which helped improve the quality of the paper.\\

\textbf{Funding:} PC is a member of GNSAGA group of the Istituto Nazionale di Alta Matematica "Francesco Severi" (INDAM) and gratefully acknowledges financial support from INDAM during the preparation of this work. Part of the work in this version of the paper was done at the Yau Mathematical Sciences Center and PC thanks the Center for providing financial support and excellent working environment. RM was supported by the Engineering and Physical Sciences Research Council [EP/S021590/1]. The EPSRC Centre for Doctoral Training in Geometry and Number Theory (The London School of Geometry and Number Theory), University College London.\\




\bibliographystyle{hep}
\bibliography{references}

\end{document}